\theoremstyle{plain}
\newtheorem{proposition}{Proposition}[section]
\newtheorem{corollary}{Corollary}[section]
\newtheorem{lemma}{Lemma}[section]
\newtheorem{theorem}{Theorem}[section]
\crefname{conjecture}{Conjecture}{Conjectures}
\crefname{theorem}{Theorem}{Theorems}
\crefname{corollary}{Corollary}{Corollaries}
\crefname{lemma}{Lemma}{Lemmas}
\crefname{proposition}{Proposition}{Propositions}
\crefname{remark}{Remark}{Remarks}
\crefname{definition}{Definition}{Definitions}
\crefname{notation}{Notation}{Notations}
\crefname{example}{Example}{Examples}
\crefname{problem}{Problem}{Problems}
\crefname{section}{\S}{Sections}
\newcommand{\floor}[1]{\left\lfloor #1 \right\rfloor}
\newcommand{\Z}{\mathbb{Z}}
\newcommand{\HW}{\operatorname{HW}}
\title{Arithmetic closed forms count the Mersenne primes, the Fermat primes and the twin-prime pairs}
\author{Mihai Prunescu \footnote{Research Center for Logic, Optimization and Security (LOS), Faculty of Mathematics and Computer Science, University of Bucharest, Academiei 14, Bucharest (RO-010014), Romania; e-mail: {\tt mihai.prunescu@gmail.com}. Simion Stoilow Institute of Mathematics of the Romanian Academy, Research unit 5, P. O. Box 1-764, Bucharest (RO-014700), Romania; e-mail: {\tt mihai.prunescu@imar.ro}.}}
\begin{document}

\maketitle

\begin{abstract} \noindent 
We construct closed forms that generate with repetitions all Mersenne primes, respectively all Fermat primes, all twin-prime pairs and all Sophie Germain primes. Also, we construct closed forms that count all Mersenne primes between $0$ and $2^{n+2}- 1$, respectively all Fermat primes between $0$ and $6n+5$ and all twin-prime pairs between $0$ and $n$. {\it Every closed form is an arithmetic term, i. e. a fixed finite composition of the following arithmetic operations: addition, subtraction, multiplication, division with remainder and the exponentiation $2^n$.} While for generating these sets with repetitions, only Wilson's Theorem is applied, for the counting forms we use more specific tests, i.e. Lucas-Lehmer, respectively Pépin, and we apply to some extent Jones' work (see Acta Arithmetica XXXV, pg. 210 - 221, 1979). To count twin primes we apply Clement's Theorem, which is closely  related to Wilson's. A closed form to count the Sophie Germain primes can be constructed similarly.
 \\[2 mm] 
\textbf{Keywords:} Mersenne prime; Fermat prime; twin-prime pair; Sophie Germain prime; arithmetic term; generating form; counting form; Lucas-Lehmer test; Pépin's test; single-fold exponential Diophantine equation; Wilson's Theorem. \\[2 mm]
\textbf{2020 Mathematics Subject Classification:} 11N05 (primary), 11D61, 03D20 (secondary).  \end{abstract}

\section{Introduction}\label{Section1}

A {\bf Mersenne} prime is a prime number of the form:
$$ 2^n - 1$$
where $n$ is some natural number. It is known that, for natural numbers $a$ and $p$, if $a^p - 1$ is prime, then $a=2$ or $p=1$. It is also known that if $2^p - 1$ is prime, then $p$ is prime. So the Mersenne primes are sometimes defined as primes of the form $2^p - 1$, where $p$ is prime.   As of 2025, there are $52$ many known Mersenne primes. Also, the largest known $7$ prime numbers are Mersenne primes. The Mersenne primes are in bijection with the even perfect numbers, i.e. even numbers which are equal with the sum of their proper divisors. It is conjectured that there are infinitely many Mersenne primes. See 
\href{https://oeis.org/A000668}{\texttt{OEIS A000668}}, \cite{A000668}.

A {\bf Fermat} prime is a prime number of the form:
$$ 2^n + 1$$
where $n$ is some natural numbers. It is known that if $2^n + 1$ is prime, then $n$ must be zero or a power of two. The numbers of the form $F_k = 2^{2^k}+1$ are called Fermat numbers. Fermat believed that all Fermat numbers are prime numbers, but $F_5$ proved already not to be prime, as observed by Euler. In fact, there are not known Fermat primes for $n > 4$. The Fermat primes are connected with the constructible regular polygons by the Gauss-Wantzel Theorem. The Theorem states that an $n$-sided regular polygon can be constructed with compass and straightedge if and only if $n$ is either a power of $2$ or the product of a power of $2$ and distinct Fermat primes. This is equivalent with the condition that for $n$, Euler's Totient function $\varphi(n)$ is a power of $2$. It is widely believed that $F_4$ is the largest Fermat prime. See \href{https://oeis.org/A019434}{\texttt{OEIS A019434}}, \cite{A019434}

In an article published in 1979 in  Acta Arithmetica  \cite{jones}, James P. Jones reconsidered the sets of all Mersenne primes and, respectively, of all Fermat primes, and showed that these sets are Diophantine. Jones applied methods developed by Yuri Matyiasevich \cite{matiyasevich1993hilbert}, Julia Robinson and by himself, and constructed relatively small Diophantine definitions for these sets. In conclusion, Jones showed that the questions about the finitude of the sets of Mersenne, respectively Fermat numbers, are questions about Diophantine sets. However, although {\it small}, these Diophantine definitions are too complicated to be used to solve the finitude question - or at least, they were not use so far to this end. 

In \cite{prunescushuniaprimes}, a closed form for the prime counting function $\pi(n)$ is given by Prunescu and Shunia.

In the present paper, we show that there exist arithmetic terms able to count the number of all Mersenne primes, respectively the number of all Fermat primes, in a given interval. Again, these expressions are too complicated to be used to solve the finitude problem. One cannot say whether the functions represented by these expressions are bounded or not. {\em However, we show here that the counting functions for Mersenne and Fermat primes can be computed by performing a fixed number of arithmetic operations from the following list:
$$x + y, \,\,\,\,x \dot{-}y, \,\,\,\, xy, \,\,\,\, \lfloor x/y\rfloor, \,\,\,\,2^x, $$
and that the corresponding expressions contain only the variable $n$.} 

Here $x \dot{-} y := \max \{x-y, 0\}$ represents the so called modified subtraction. These operations build terms which are defined on the set of natural numbers and take all their values in this set. As 
$$a \bmod b := a \dotdiv  b \cdot \left \lfloor \frac{a}{b} \right \rfloor ,$$
the operation $a \bmod b$ is an arithmetic term and will be intensively used. Following the customs of  mathematical logic, {\bf arithmetic terms} are defined inductively. All constants $c \in \mathbb N$ and all variables $n_i$ are arithmetic terms. If $t_1$ and $t_2$ are arithmetic terms, then the expressions $(t_1 + t_2)$, $(t_1 \divdot t_2)$, $(t_1 \cdot t_2)$, $\lfloor t_1/t_2\rfloor$ and $2^{t_1}$ are arithmetic terms. All closed forms constructed in this article are arithmetic terms. We will use the notions {\it closed form } and {\it arithmetic term} synonymously, or even the notion {\it arithmetic closed form}. 

We say that a relation $R \subseteq \mathbb N^m$ has a {\bf singlefold} (exponential) Diophantine definition if there is an (exponential) Diophantine expression $E(\vec n, \vec x)$ such that the following conditions are fulfilled:
\begin{enumerate}
    \item For all $\vec n \in \mathbb N^m$ one has:
    $$R(\vec n) \longleftrightarrow \exists\,\vec x \in \mathbb N^k\,\,\,\, E(\vec n, \vec x) = 0. $$ 
    \item For all $\vec n \in \mathbb N^m$ and for any two tuples $\vec x_1, \vec x_2 \in \mathbb N^k$,
    $$ E(\vec n, \vec x_1) =  E(\vec n, \vec x_2) = 0 \longrightarrow \vec x_1 = \vec x_2.$$
\end{enumerate}

The main ingredient in our arithmetic term-constructions is an effective method found by Mazzanti \cite{mazzanti2002plainbases}. In this method, the input is an exponential Diophantine equation $E(\vec n, \vec x)$ depending on $m$ natural numbers building the tuple $\vec n$. The method constructs an arithmetic term $\theta(\vec n,t)$ that expresses the number of solutions $\vec x = (x_1, x_2, \dots x_k)$ such that $E(\vec n, \vec x) = 0$ and such that $\vec x \in \{0, 1, \dots, t\}^k$. Although Jones constructed Diophantine definitions for the set of all Mersenne primes, respectively for the set of all Fermat primes, we cannot use his equations to produce counting arithmetic terms by Mazzanti's method. This is so,  because these Diophantine definitions are not single-fold, and not even finite-fold. Indeed, both Jones' definitions have the form:
$$n \in M \longleftrightarrow \exists\,x_1, \dots, x_k \in \mathbb N\,\,\,\, E(n, x_1, \dots, x_k) = 0,$$
where $M$ is the set of Mersenne primes, respectively Fermat primes. But in both definitions, if $x \in M$, then there are infinitely many tuples $(x_1, \dots, x_k) \in \mathbb N^k$ which satisfy the equation for the given $x$. The Diophantine equations constructed by Jones are based on the known Diophantine definitions of the general exponential relation $\{(x, y, x^y)\} \subset \mathbb N^3$. No singlefold Diophantine definition for the general exponentiation is known so far. According to Matiyasevich, the existence of a singlefold Diophantine definition for the two-argument exponential function is one of the most important open problems.

As we still do not know any single-fold or at least finite-fold Diophantine definition for this relation, we cannot modify Jones' proof for getting Diophantine equations for which Mazzanti's method applies. 

Instead, we use a part of the fore-work did by Jones in order to construct single-fold exponential Diophantine definitions of these sets. This is done as follows:
\begin{enumerate}
    \item For the Mersenne primes, we apply, following Jones, the Lucas-Lehmer divisibility criterion. This criterion says that $2^n-1$ is prime if and only if it divides $s(n-1)$, where $s(n)$ is given by the recurrence:
    $$s(1) = 4,\,\,\,\,s(n+1) = s(n)^2 - 2.$$ 
    Also following Jones, there is a connection between the sequence $s(n)$ and the solutions of the Pell equation $x^2 - 3y^2 = 1$. Namely, if $(x(n), y(n))$ is the $n$-th pair of solutions of this equation, then $s(n) = 2x(2^{n-1})$. We apply an expression representing $x(n)$ as an arithmetic term, which was found by Prunescu and Sauras-Altuzarra in \cite{prunescusauras2024representationcrecursive}, and we derive an arithmetic term representing $s(n)$. Further, we transform the primality condition $(2^n - 1) \mid s(n-1)$ in a convenient singlefold exponential Diophantine equation. Here the word {\it convenient} means simple in $\vec x$. The notion will be defined in section \cref{Section2}. On this equation, one can apply Mazzanti's method to built an arithmetic term counting the Mersenne primes. 
    \item For the Fermat primes, we apply, following Jones, Pépin's test, which says that $N = 2^n + 1$ is prime if and only if: 
    $$ a^{\frac{N-1}{2}} \equiv -1 \mod N, $$ 
    where $a$ is a natural number such that $N \nmid a$ and such that $(a/N) = -1$, where the latest is Legendre's symbol. The test was put by Jones in the form of a combination of exponential Diophantine divisibility conditions which prove to be suitable for Mazzanti's method. 
\end{enumerate}

The paper is organized as follows. In \cref{SectionSupp} we apply Wilson's Theorem, the fact that $n!$ is represented by an arithmetic closed form, and the definitions of the Mersenne primes, Fermat primes, twin-prime pairs, respectively Sophie Germain primes, in order to construct arithmetic closed forms that generate with repetitions these special sets of prime numbers. In \cref{Section2}, Mazzanti's method is presented. We also add a method which leads to the construction of single-fold Diophantine equations with less monomials, respectively using simpler functions. In \cref{Section3}, we recall a method to express C-recursive sequences by arithmetic terms.  In \cref{Section4} we apply the method of \cref{Section3} to construct an arithmetic term expressing the Lucas-Lehmer test sequence. In \cref{Section5} we use this preparation to produce a single-fold exponential Diophantine representation of the Mersenne primes and to construct the corresponding arithmetic term that counts the Mersenne primes. In \cref{Section6} we present Pépin's test and the way Jones rewrote it, while in \cref{Section7} we construct the exponential Diophantine single-fold representation for Fermat primes and the corresponding counting term. In \cref{Section9} we sketch the closed form to count the twin-prime pairs and we make clear that a closed form to count the Sophie Germain primes has an analogous construction. 

Various properties and applications of Mersenne primes and Fermat primes are extremely well presented in the monographs \cite{krizek1} and \cite{krizek2}. 

I consider this paper to be a natural continuation of Jones's paper \cite{jones} from {\it Acta Arithmetica}.

\section{Generation with repetitions}\label{SectionSupp}

\begin{lemma} Given an integer $ n \geq 1 $ for which $ 2^n + 1 $ is prime, then $ n = 2^k $ for some $ k \geq 0 $ or $n = 0$. \end{lemma}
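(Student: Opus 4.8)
The plan is to prove the contrapositive: if $n \geq 1$ is not a power of two, then $2^n + 1$ is composite. The key algebraic fact is that $x^m + 1$ factors nontrivially whenever $m$ is odd and $m > 1$, via the identity
$$x^m + 1 = (x+1)\left(x^{m-1} - x^{m-2} + \cdots - x + 1\right),$$
which holds because $-1$ is a root of $x^m + 1$ when $m$ is odd. More generally, $a^m + 1$ is divisible by $a + 1$ for any odd exponent $m$.

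First I would write $n = 2^k \cdot q$ where $q \geq 1$ is the odd part of $n$, so that $k \geq 0$ is the exponent of $2$ in the factorization of $n$. The assumption that $n$ is not a pure power of two means precisely that $q > 1$. Setting $a = 2^{2^k}$, I observe that
$$2^n + 1 = 2^{2^k q} + 1 = \left(2^{2^k}\right)^q + 1 = a^q + 1.$$
Since $q$ is odd and greater than $1$, the factorization above applies with $x = a$ and $m = q$, giving $a + 1 \mid a^q + 1$.

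The final step is to verify that this yields a nontrivial factor, i.e. that $1 < a + 1 < 2^n + 1$, so that $2^n + 1$ is genuinely composite. The lower bound is clear since $a = 2^{2^k} \geq 2$, hence $a + 1 \geq 3 > 1$. For the upper bound, because $q > 1$ we have $a^q + 1 > a + 1$ (indeed $a^q \geq a^2 > a$ as $a \geq 2$), so $a + 1$ is a proper divisor exceeding $1$. This shows $2^n + 1$ is composite, contradicting primality.

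I do not anticipate a serious obstacle here; the only point requiring minor care is confirming strict inequalities so the divisor is \emph{proper} (neither $1$ nor the full number), which is why the hypothesis $n \geq 1$ together with $q > 1$ matters. Taking the contrapositive then gives the stated result: if $2^n + 1$ is prime for some $n \geq 1$, then $n$ has no odd factor exceeding $1$, forcing $n = 2^k$ for some $k \geq 0$. The edge case $n = 0$ is recorded separately in the statement since there $2^0 + 1 = 2$ is prime while $0$ is not of the form $2^k$, so it must be listed as an alternative.
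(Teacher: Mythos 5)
Your argument is correct and is essentially the paper's own proof: both factor $n = 2^k q$ with $q$ odd, use the polynomial divisibility $(X+1) \mid (X^q+1)$ for odd $q$, and substitute $X = 2^{2^k}$ to exhibit a divisor of $2^n+1$. You are somewhat more careful than the paper in checking that the divisor $2^{2^k}+1$ is proper and nontrivial, which is a worthwhile (if minor) addition.
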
 

\begin{proof} Suppose that $n = 2^k d$ with $d > 1$ odd. The polynomial $X^d + 1$ has the root $x = -1$. 
    Then  $(X+1) \,|\,(X^d + 1)$ as polynomials, so $(2^k + 1) \,|\, (2^n +1)$. Contradiction.
\end{proof} 

From now on, we will consider that a Fermat prime is a prime number of the form $2^{2^k} + 1$, excluding the number $2=2^0 + 1$. 

Wilson's Test of primality says that a number $n \in \mathbb N$ is prime if and only if 
$$ (n-1) ! \equiv -1 \mod n.$$

In \cite{prunescusauras2024factorial}, Prunescu and Sauras-Altuzarra observed that the Kalmar elementary function $n \leadsto n!$ is represented by any arithmetic term. (This representation was first used by Julia Robinson in \cite{juliarobinson}, but the authors of \cite{prunescusauras2024factorial} were not aware of this fact.) Namely,
$$ n! = \left \lfloor \frac{r(n)^n}{\binom{r(n)}{n}} \right \rfloor$$
where $r(n)$ is an arithmetic term satisfying:
$$ \forall \, n \in \mathbb N\,\,\,\,r(n) \geq {(n+1)}^{(n+2)},$$
and the binomial coefficient is also expressed as an arithmetic term:
$$\binom{x+y}{x} = \left \lfloor \frac{(1 + 2^{x+y})^{x+y}}{2^{x(x+y)}} \right \rfloor \bmod 2^{x+y}. $$ 
We can eliminate the general exponentiation $(1 + 2^{x+y})^{x+y}$
by using the rule $$a^b = 2^{(ab + a + 1)b} \bmod (2^{ab + a + 1} - a)$$ given by Marchenkov in \cite{marchenkov2007superposition}. In Prunescu and Shunia \cite{prunescushuniaprimes}, the following closed term for the factorial function was used:
\begin{align*}
n! &= \floor{ (8^{n^2})^n \Big/ \binom{8^{n^2}}{n} } .
\end{align*}
Moreover, the expression used there for the binomial coefficient is a more advanced one. Although apparently more complicated, these expression can be transformed in a shorter singlefold exponential Diophantine equation then the usual expression given here. 

Combining an arithmetic term expressing $n!$ with Wilson's Test, Prunescu and Sauras-Altuzarra produced in \cite{prunescusauras2024factorial} an arithmetic term $t(n)$ whose image $t(\mathbb N)$ is the set of prime numbers:
$$t(n) := 2 + ( 2 \cdot n! ) \bmod (n+1). $$
Indeed, $t(n) = n+1$ if $n+1$ is prime, respectively $t(n) = 2$ if $n+1$ is not a prime number. We define also the term:
$$z(n) := 3 + ( 3 \cdot n! ) \bmod (n+1).$$
We observe that for $n \neq 1$, $z(n) = n+1$ if $n+1$ is a prime number, respectively $z(n) = 3$ if $n+1$ is not prime.

Using the definition  of the Mersenne primes, respectively of the Fermat primes, and observing that $3$ is both a Mersenne and a Fermat prime,  we obtain the following: 

\begin{theorem}
    Let $t(n) = 2 + ( 2 \cdot n! ) \bmod (n+1)$ be the arithmetic term generating all primes, and $z(n) = 3 + ( 3 \cdot n! ) \bmod (n+1)$.
    
    For all $n \in \mathbb N$, the following terms $m_1(n)$ and $m_2(n)$ are Mersenne primes and all Mersenne primes are generated with repetitions by $m_1(n)$, ( respectively by $m_2(n)$):
    $$m_1(n) := z(2^{n+1} - 2), $$
    $$m_2(n) := z(2^{t(n)} - 2) . $$

    For all $n \in \mathbb N$, the following terms $f_1(n)$ and $f_2(n)$ are Fermat primes and all Fermat primes are generated with repetitions by $f_1(n)$, (respectively by $f_2(n)$):
    $$f_1(n) := z( 2^{n + 2} ),$$
    $$f_2(n) := z( 2^{2^n} ).$$
\end{theorem}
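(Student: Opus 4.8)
The plan is to isolate the behaviour of the term $z$ on even inputs and then read off all four claims from that single computation. First I would prove the clean dichotomy: for every \emph{even} $k \geq 0$, one has $z(k) = k+1$ if $k+1$ is prime and $z(k) = 3$ otherwise. If $k+1$ is prime then necessarily $k \geq 2$ (an even $k$ cannot give $k+1 \in \{2\}$ here, since $k=0$ yields the non-prime $1$), and Wilson's Theorem gives $k! \equiv -1 \pmod{k+1}$, hence $3 \cdot k! \equiv -3 \equiv k-2 \pmod{k+1}$; because $k \geq 2$ we have $0 \leq k-2 < k+1$, so this residue is exactly $(3 \cdot k!)\bmod(k+1)$ and $z(k) = 3 + (k-2) = k+1$. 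If $k+1$ is not prime, then for even $k$ the number $k+1$ is odd and is either $1$ or composite with $k+1 \geq 9 > 4$, so the standard fact that $m \mid (m-1)!$ for every composite $m > 4$ (together with the trivial case modulo $1$) forces $3 \cdot k! \equiv 0 \pmod{k+1}$ and hence $z(k) = 3$. The only inputs where this dichotomy fails are the \emph{odd} values $k=1$ (where $z(1)=4$) and $k=3$ (where $z(3)=5$); since every argument fed to $z$ below has the form $2^e - 2$ or $2^e$ with $e \geq 1$, it is always even, so these two exceptional inputs never occur.

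For the Mersenne statements I would note that in $m_1(n) = z(2^{n+1}-2)$ and $m_2(n) = z(2^{t(n)}-2)$ the argument is even and that $(\text{argument})+1$ equals $2^{n+1}-1$, respectively $2^{t(n)}-1$. By the dichotomy each output is either this Mersenne number, when it is prime, or the value $3$. The observation that makes the image \emph{exactly} the set of Mersenne primes is that $3 = 2^2-1$ is itself a Mersenne prime, so no spurious value is ever produced; containment of the image in the Mersenne primes is then immediate. For surjectivity, every Mersenne prime has the form $2^q-1$ with $q \geq 2$ prime: for $m_1$ take $n = q-1 \in \mathbb N$; for $m_2$ use that $t(\mathbb N)$ is the full set of primes, so $t(n) = q$ for a suitable $n$, whence $m_2(n) = z(2^q - 2) = 2^q - 1$.

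For the Fermat statements the argument of $z$ in $f_1(n) = z(2^{n+2})$ and $f_2(n) = z(2^{2^n})$ is again even, with $(\text{argument})+1$ equal to $2^{n+2}+1$, respectively $2^{2^n}+1$. By the dichotomy each output is either this number, when prime, or the value $3$; by the Lemma a prime of the form $2^e+1$ forces $e$ to be a power of two, so every such output is a genuine Fermat prime, and again $3 = 2^{2^0}+1 = F_0$ is itself a Fermat prime, so the image lands exactly in the Fermat primes. Surjectivity for $f_2$ is direct, since $f_2(n) = F_n$ whenever $F_n$ is prime and already $f_2(0) = z(2) = 3 = F_0$. For $f_1$ one matches $n+2 = 2^k$, i.e. $n = 2^k-2$, to reach $F_k$ for every $k \geq 1$; the remaining prime $F_0 = 3$ cannot be reached this way (it would require $n=-1$), but it appears as the default value $3$ whenever $2^{n+2}+1$ is composite (for instance at $n=1$, where $2^3+1=9$), so it too lies in the image.

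The routine arithmetic is minor; the part that needs care is the bookkeeping around the two exceptional inputs of $z$ and the verification that the single default value $3$ is \emph{simultaneously} a Mersenne prime and a Fermat prime. It is precisely this coincidence, together with the fact that the exponential arguments are always even and therefore rule out the exceptional inputs $k \in \{1,3\}$, that upgrades each ``the image is contained in'' statement to ``the image equals'', and that rescues the otherwise unreachable Fermat prime $F_0$ in the case of $f_1$.
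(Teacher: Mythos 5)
Your proposal is correct and follows essentially the same route as the paper: Wilson's Theorem gives the dichotomy for $z$, and the coincidence that $3$ is simultaneously a Mersenne prime and a Fermat prime ($F_0$) makes the default value harmless, so the image is exactly the intended set. Your restriction of the dichotomy to even inputs is in fact a useful sharpening, since the paper's blanket claim that $z(n)=3$ whenever $n\neq 1$ and $n+1$ is composite fails at $n=3$ (where $z(3)=5$); as you observe, this exceptional odd input never arises because every argument fed to $z$ in the theorem is even.
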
 

As for writing down arithmetic terms counting the Mersenne, respectively the Fermat primes, one should rewind the primality test and write it as a singlefold exponential Diophantine equation, the general Wilson Test prove to be less practical. So, in the coming sections we will rewind more specific primality tests: the Lucas-Lehmer Test for the Mersenne Primes and the Pépin Test for the Fermat primes. 

No we show a similar  closed form for the twin primes, i. e. pairs of natural numbers $(p, p+2)$, both of them prime, see \href{https://oeis.org/A001359}{\texttt{OEIS A001359}}, \cite{A001359}. We observe that the functions:
$$|a - b| := (a \dotdiv b) + (b \dotdiv a),$$
$$\min(a,b) := \left \lfloor \frac{(a + b) \dotdiv |a - b|}{2} \right \rfloor, $$
are arithmetic terms. 

We introduce the following arithmetic terms:
$$p_1(n) := \min(z(n+2), z(n+4)) = 3 + \min( (3 \cdot (n+2)!) \bmod (n+3), (3 \cdot (n+4)!) \bmod (n+5)),$$
$$p_2(n) := p_1(n) + 2.$$

\begin{theorem}
    For all $n \in \mathbb N$, $(p_1(n), p_2(n))$ is a pair of twin primes. Moreover, all twin-prime pairs are generated with repetitions by $(p_1(n), p_2(n))$. 
\end{theorem}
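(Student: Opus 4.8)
The plan is to reduce the whole statement to the behaviour of the single term $z$ on the two families of arguments that actually occur, $n+2$ and $n+4$, and then to let the outer $\min$ absorb all small exceptional cases. First I would make the observation recorded before the theorem fully precise, including its exceptions. Using Wilson's Theorem together with the elementary fact that $(m-1)!\equiv 0 \pmod m$ for every composite $m>4$, one obtains, for every argument $m\geq 2$: $z(m)=m+1$ when $m+1$ is prime, and $z(m)=3$ when $m+1$ is composite, with the single anomaly $z(3)=5$ (since there $m+1=4$ and $3!=6$ is not divisible by $4$). I would also record the trivial bound $z(m)\geq 3$ for all $m$. The point is that $n+4\geq 4$ for every $n\in\mathbb{N}$, so the anomalous argument $m=3$ never occurs in $z(n+4)$; only $z(n+2)$ can be anomalous, and precisely at $n=1$, where $n+3=4$.

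With this description the first assertion is a short case analysis. If both $n+3$ and $n+5$ are prime, then $(n+3,n+5)$ is itself a twin-prime pair and $z(n+2)=n+3<n+5=z(n+4)$, so $p_1(n)=n+3$ and $p_2(n)=n+5$. If they are not both prime, I claim $\min(z(n+2),z(n+4))=3$, so that $(p_1(n),p_2(n))=(3,5)$, again a twin-prime pair. Indeed, if $n+5$ is composite then $z(n+4)=3$, and since every $z$-value is at least $3$ the minimum equals $3$. If instead $n+5$ is prime while $n+3$ is composite, then $n\neq 1$ (because $n=1$ would force $n+5=6$), so the clean part of the description gives $z(n+2)=3$ and again the minimum is $3$. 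This is exactly the spot where the anomaly $z(3)=5$ has to be shown harmless: it arises only at $n=1$, and there $n+5=6$ is composite, so the first subcase already forces the minimum to $3$ through $z(n+4)=z(5)=3$.

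For the generation-with-repetitions claim I would prove surjectivity onto the set of twin-prime pairs directly. Given any twin-prime pair $(p,p+2)$ we have $p\geq 3$, so $n:=p-3\in\mathbb{N}$; since $p$ is prime we have $p\neq 4$, hence $n\neq 1$, and $n+3=p$, $n+5=p+2$ are both prime. The first case above then yields $p_1(n)=p$ and $p_2(n)=p+2$. Hence every twin-prime pair is produced (with $(3,5)$ also produced at $n=0$), and since all non-twin inputs collapse to $(3,5)$, the map produces exactly the twin-prime pairs, with repetitions.

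The one genuinely delicate step, which I would treat with care, is that the convenient slogan ``$z(m)=m+1$ if $m+1$ is prime and $z(m)=3$ otherwise'' is literally false at the small arguments $m=1$ and $m=3$. The construction succeeds only because the shift by $2$ and the $\min$ are arranged so that the misbehaving argument $m=3$, i.e.\ $n=1$, is always paired with a composite $n+5$, and because $m=1$ is never reached by either $n+2$ or $n+4$ for $n\in\mathbb{N}$. Pinning down these small-argument exceptions and checking that the $\min$ neutralises them is the crux that makes the case analysis airtight.
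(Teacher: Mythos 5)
Your proof is correct and follows essentially the same case analysis as the paper: if $(n+3,n+5)$ is a twin-prime pair the outer $\min$ returns $n+3$, and otherwise it collapses to $3$, yielding the pair $(3,5)$. You are in fact more careful than the paper, which describes $z$ as clean for all $n\neq 1$ and thereby overlooks the anomaly $z(3)=5$; your observation that this anomaly occurs only at $n=1$, where $n+5=6$ is composite and hence $z(n+4)=3$ rescues the minimum, is precisely the check needed to make the paper's assertion ``at least one of the values will be equal $3$'' airtight.
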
 

\begin{proof}
    If $(n+3, n+5)$ is a pair of twin primes, then 
    $$p_1(n) = \min(z(n+2), z(n+4)) = \min(n+3, n+5) = n+3\,\,; \,\,\,\,p_2(n) = n+5.$$
    If not, then at least one of the values $z(n+2)$ and $z(n+4)$ will be equal $3$, while the other value will be $\geq 3$. Then, in this case, $p_1(n) = 3$ and $p_2(n) = 5$. 
\end{proof} 

With this idea, we can easily develop closed forms generating further special primes. We recall that a Sophie Germain prime is a prime $p$ such that $2p+1$ is a prime number, (see \href{https://oeis.org/A005384}{\texttt{OEIS A005384}}, \cite{A005384}). We observe that $p=2$ is a Sophie Germain prime, followed by $p = 3$. We introduce the expression:
$$g(n) := \min(t(n), t(2n + 2)) = 2 + \min ( (2 \cdot n!) \bmod (n+1), ( 2\cdot (2n+2)! ) \bmod (2n+3)).$$

\begin{theorem}
     For all $n \in \mathbb N$, $g(n)$ is a Sophie Germain prime and all Sophie Germain primes are generated with repetitions by $g(n)$. 
\end{theorem}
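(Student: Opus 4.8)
The plan is to follow the same strategy used in the proof of the twin-prime theorem immediately above, since $g$ is built from the prime-generating term $t$ in exactly the analogous way. First I would recall the defining behavior of $t$: for every $m \in \mathbb{N}$ one has $t(m) = m+1$ when $m+1$ is prime and $t(m) = 2$ otherwise (this is the property established just after the definition of $t$, and it holds down to the small arguments, e.g.\ $t(1) = 2 = 1+1$). Then I would note that in the expression $g(n) = \min(t(n), t(2n+2))$, the first argument encodes a primality test on $n+1$, while the second encodes a primality test on $2n+3 = 2(n+1)+1$. Setting $p := n+1$, these are precisely the two conditions defining $p$ to be a Sophie Germain prime.

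The argument then proceeds by a case split on whether $(n+1, 2n+3)$ are both prime. If both $n+1$ and $2n+3$ are prime, then $t(n) = n+1$ and $t(2n+2) = 2n+3$, and since $n+1 < 2n+3$ the minimum is $g(n) = n+1 = p$, which by construction is a Sophie Germain prime. If at least one of $n+1$, $2n+3$ is not prime, then the corresponding value of $t$ equals $2$ while the other is $\geq 2$, so $g(n) = 2$; and $2$ is itself a Sophie Germain prime because $2\cdot 2 + 1 = 5$ is prime. Hence in either case $g(n)$ is a Sophie Germain prime, which establishes the first assertion.

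For generation with repetitions I would argue surjectivity onto the Sophie Germain primes directly: given any Sophie Germain prime $p$, put $n := p - 1 \in \mathbb{N}$, which is valid since the smallest such prime is $p = 2$. Then $n+1 = p$ and $2n+3 = 2p+1$ are both prime, so we land in the first case above and obtain $g(n) = p$. Combined with the first part, this shows that the image $g(\mathbb{N})$ is exactly the set of Sophie Germain primes.

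I expect no genuine obstacle here: the proof is a routine case analysis mirroring the twin-prime case. The only points that deserve a line of care are the boundary behavior of $t$ at small arguments (to confirm the stated rule $t(m) \in \{2, m+1\}$ holds uniformly) and the verification that the fall-back output value $2$ is legitimately a Sophie Germain prime, so that the ``off-case'' value of $g$ does not leave the target set.
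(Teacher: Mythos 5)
Your proof is correct and follows essentially the same case analysis as the paper's own proof: both prime gives $g(n)=n+1$, otherwise $g(n)=2$, plus surjectivity via $n=p-1$. You are in fact slightly more careful than the paper, which leaves implicit the checks that $2$ is itself a Sophie Germain prime and that $t$ behaves as claimed at small arguments.
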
 

\begin{proof}
    If $n+1$ is a Sophie Germain prime, then
    $$g(n) = \min(n+1, 2n+3) = n+1.$$
    If $n+1$ is not a Sophie Germain prime, then at least one of the values $t(n)$ and $t(2n+2)$ is equal $2$, and the other one is $\geq 2$, so in this case $g(n) = 2$. 
\end{proof}


\section{Closed forms for counting solutions}\label{Section2} 

In this section we present Mazzanti's method. Given an exponential Diophantine equation $E(\vec n, \vec x)$ depending on $m$ natural numbers building the tuple $\vec n$. The method constructs an arithmetic term $\theta(\vec n,t)$ that expresses the number of solutions $\vec x = (x_1, x_2, \dots x_k)$ such that $E(\vec n, \vec x) = 0$ and such that $\vec x \in \{0, 1, \dots, t\}^k$.

The following functions are represented by arithmetic terms, see \cite{mazzanti2002plainbases, marchenkov2007superposition, prunescushunia2024gcd}:
\begin{align*}
\binom{a}{b} &= \floor{\frac{(2^a+1)^a}{2^{ab}}} \bmod 2^a , \\
\gcd(a,b) &= \left ( \floor{\frac{5^{ ab(ab + a + b)}}{(5^{a^2 b} - 1)(5^{ab^2}-1)}} \bmod 5^{ab} \right ) - 1, \\
\nu_2(n) &= \floor{\frac{\gcd(n, 2^n)^{n+1} \bmod (2^{n+1}-1)^2}{2^{n+1}-1}} , \\
\HW(n) &= \nu_2\left(\binom{2n}{n}\right) .
\end{align*}  

Here $\binom{a}{b}$ is the binomial coefficient, $\gcd(a,b)$ is the greatest common divisor, $\nu_2(n)$ is the exponent of $2$ in the prime-number decomposition of $n$. Remark that Mazzanti gave another $\gcd$-formula in \cite{mazzanti2002plainbases}. 
The exponent $5$ given in the present formula can be replaced by $2$, but the the formula does not work anymore in the case $(a,b) = (1,1)$. The exponent base $5$ can be replaced by the exponent basis $2$ also using Marchenkov's formula:
$$a^b = 2^{(ab + a + 1)b} \bmod (2^{ab + a + 1} - a),$$
and the same can be done for the general exponentiation in the closed form for the binomial coefficient. 
$HW(n)$ is the number of ones in the binary representation of $n$. The term displayed above is a consequence of Kummer's Theorem, see Matiyasevich \cite{matiyasevich1993hilbert}.

Consider the generalized geometric series:
\begin{align}
S_r (q, t) = \sum_{j=0}^{t-1} j^r q^j .
\end{align} 
Matiyasevich shows that $S_r(q,t)$ is represented by an arithmetic term $G_r(x,y)$ applied to the pair $(q,t)$, see \cite{matiyasevich1993hilbert}, in Appendix.

Let $a,b \in \mathbb{N} $ such that $ 0 \leq a < 2^b$. It is proved in Mazzanti \cite{mazzanti2002plainbases} that the natural number 
\begin{align}
\delta(a,b) := (2^b - 1)(2^b - a + 1) = 2^{2b} - 2^b a + a - 1.
\end{align}
has the property that:
\begin{align*}
\HW(\delta(a,b)) = \begin{cases}
2b, & \text{if } a = 0, \\
b, & \text{if } a \neq 0.
\end{cases}
\end{align*} 
Let $t, w > 0$ be two positive natural numbers. Suppose that the function $f : [0, t-1]^k \rightarrow \mathbb{N}$ satisfies the condition:
$$\forall \,\vec a \in [0, t-1]^k\,\,\,\, 0 \leq f(\vec a) < 2^w.$$
The function  $\beta : \{0, 1, \dots, t-1\}^k   \rightarrow \{0, 1, \dots,  t^k - 1\}$ given by
\begin{align*}
\beta(\vec{a}) = a_1 + a_2 t(\vec{n}) + \cdots + a_k t(\vec{n})^{k-1}.
\end{align*} 
is one to one and onto. 
We define the quantity:
\begin{align*}
M (f,t,w) = \sum_{\vec{a} \in \{0, \dots, t-1\}^k} 2^{2w \beta(\vec{a})} \delta(f(\vec{a}), w).
\end{align*} 
If expressed in base $2$, the number $M(f, t, w)$ is represented by a string of $0$ and $1$ consisting  of the concatenated strings representing $\delta(f(\vec a), w)$ in base $2$. In particular, the number of laticial points:
$$\{\vec a \in [0, t-1]^k : f(\vec a) = 0 \}$$
is exactly:
$$\frac{HW(M(f,t,u))}{u} - t^k.$$
We are interested in functions $f(\vec x)$ which are {\bf simple} exponential polynomials, i.e.  sums of monomials:
\begin{align*}
m(\vec x) = c v_1^{x_1} \cdots v_k^{x_k} x_1^{r_1} \cdots x_k^{r_k},
\end{align*}
where $r_1, \ldots, r_k \geq 0$, $v_1, \ldots, v_k \geq 1$ are integers, and $c \in \Z$. Using the identity:
\begin{align*}
\sum_{\vec{a} \in \{0, \dots, t-1\}^k}
a_1^{r_1} v_1^{a_1}
\cdots a_k^{r_k} v_k^{a_k} = G_{r_1}(v_1, t)
\cdots G_{r_k}(v_k, t)
= \prod_{i=1}^k G_{r_i}(v_i, t) ,
\end{align*}
we compose the arithmetic term:
\begin{align} \label{TermA}
\mathcal{A}_k(m(\vec{x}), t, u)
&= -(2^{u} - 1) \cdot c
\cdot
\prod_{i=1}^k G_{r_i}(2^{2u t^{i-1}} v_i, t)
\end{align}
which depends only on $t$ and $u$. Here $m(\vec x)$, which occurs in the left-hand side, is only a symbol. Also, the free term $c_0$ of $f$ - meaning the exponential monomial $m(\vec x)$ with $v_1 = \cdots = v_k = 1$ and $r_1 = \cdots = r_k = 0$, contributes with the value: 

\begin{align} \label{TermC}
{\mathcal C}_k(c_0, t, u) 
= \frac{(2^{u} - c_0 + 1)(2^{2u t^k} - 1)}{2^{u} + 1}.
\end{align}
The expression $M(f,t,u)$ defined above
is an arithmetic term depending on $t$, $u$, and on all the coefficients of $f(\vec x)$:
$$M(f,t,u) = {\mathcal C}_k(c_0, t, u) + \sum \mathcal{A}_k(m(\vec{x}), t, u). $$ 
Now let $t(\vec n) > 0$ be an arithmetic term and let:
$$f(\vec n, \vec x) = 0$$ 
be an exponential Diophantine equation in parameters $\vec n$, such that:
\begin{enumerate} 
    \item[(i)] The expression $f(\vec n, \vec x)$ is simple in $\vec x$.
    \item[(ii)] The coefficients of $f$ are arithmetic terms in $\vec n$.
    \item[(iii)] The bases of exponentiation $v_i$ are themselves arithmetic terms (or even other kind of functions) in $\vec n$. 
    \item[(iv)] There is an arithmetic term $w(\vec n) > 0$ such that
    $$\forall \,\vec a \in [0, t(\vec n)-1]^k, \quad 0 \leq f(\vec n, \vec a) < 2^{w(\vec n)}.$$
\end{enumerate} 

Altogether, we proved the following:
\begin{theorem}\label{proof:numberofsolutions}
For all $\vec n$,  the number of laticial points:
$$\{\vec a \in [0, t(\vec n)-1]^k: f(\vec n, \vec a) = 0 \}$$
is given by the following arithmetic term in $\vec n$:
$$\frac{HW(M(f, \vec n, t(\vec n),w(\vec n))}{w(\vec n)} - t(\vec n)^k.$$
\end{theorem}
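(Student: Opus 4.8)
The plan is to assemble the two facts already prepared in this section: the ``clean concatenation'' description of $M$ and the explicit decomposition $M = {\mathcal C}_k + \sum {\mathcal A}_k$. First I would settle correctness of the count. Fix $\vec n$ and abbreviate $t = t(\vec n)$, $w = w(\vec n)$, and let $N_0 = |\{\vec a \in [0,t-1]^k : f(\vec n, \vec a) = 0\}|$. By hypothesis (iv) we have $0 \le f(\vec n, \vec a) < 2^w$ on the whole box, so each $\delta(f(\vec n, \vec a), w)$ lies in $[0, 2^{2w})$ and hence occupies at most $2w$ binary digits. Since $\beta$ is a bijection from $[0,t-1]^k$ onto $\{0,\dots,t^k-1\}$, the factors $2^{2w\beta(\vec a)}$ drop these blocks into the pairwise disjoint bit-windows $[2w\beta(\vec a), 2w\beta(\vec a) + 2w)$, which together tile $[0, 2wt^k)$. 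Hence no carries occur between blocks and the Hamming weight is additive: $HW(M) = \sum_{\vec a} HW(\delta(f(\vec n, \vec a), w))$. Invoking the dichotomy $HW(\delta(a,w)) = 2w$ for $a=0$ and $=w$ for $a \neq 0$, and splitting the sum according to whether $f(\vec n, \vec a)$ vanishes, gives $HW(M) = 2w N_0 + w(t^k - N_0) = w(N_0 + t^k)$; dividing by $w$ and subtracting $t^k$ isolates $N_0$, the claimed value. (This also shows the earlier $u$ should read $w$ throughout.)

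Next I would confirm that the displayed expression really is an arithmetic term in $\vec n$, which is where the decomposition enters. Writing $\delta(f, w) = (2^{2w}-1) - (2^w-1)f$ and separating the constant monomial $c_0$ from $f$, one gets $\delta(f(\vec a), w) = (2^w-1)(2^w+1-c_0) - (2^w-1)\sum_m m(\vec a)$. Substituting into $M = \sum_{\vec a} 2^{2w\beta(\vec a)}\delta(f(\vec a),w)$ and using $\sum_{\vec a} 2^{2w\beta(\vec a)} = (2^{2wt^k}-1)/(2^{2w}-1)$ reproduces ${\mathcal C}_k$ of \eqref{TermC}; for each remaining monomial, the factorization $2^{2w\beta(\vec a)} = \prod_i (2^{2wt^{i-1}})^{a_i}$ together with the multiplicative identity $\sum_{\vec a}\prod_i (2^{2wt^{i-1}}v_i)^{a_i} a_i^{r_i} = \prod_i G_{r_i}(2^{2wt^{i-1}}v_i, t)$ reproduces ${\mathcal A}_k$ of \eqref{TermA}. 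By hypotheses (i)--(iii) the equation is simple in $\vec x$ with coefficients $c$ and bases $v_i$ that are arithmetic terms in $\vec n$; since every $G_r$ is an arithmetic term (Matiyasevich), $HW$ is an arithmetic term, and $2^x$, multiplication, $\lfloor\cdot/\cdot\rfloor$, addition and $\dotdiv$ are the primitive operations, the full expression $HW(M)/w - t^k$ is an arithmetic term in $\vec n$.

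The delicate point is the \emph{no-carry additivity} of $HW$ underlying the first paragraph. It hinges on the tight bound $\delta(a,w) < 2^{2w}$ valid for all admissible $a$ --- which is exactly why hypothesis (iv) is posed with the strict inequality $f < 2^w$ rather than $f \le 2^w$ --- and on $\beta$ being a bijection onto the full initial segment $\{0,\dots,t^k-1\}$, so that consecutive $2w$-bit windows neither overlap nor leave gaps. Any slack (permitting $f = 2^w$, or a hole in the range of $\beta$) would let one block spill into its neighbour and destroy the Hamming-weight accounting. Once this encoding is secured, the remaining matching of $M$ against ${\mathcal C}_k + \sum{\mathcal A}_k$ and the verification that each ingredient is a primitive arithmetic operation are routine bookkeeping.
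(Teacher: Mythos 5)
Your proposal is correct and follows essentially the same route as the paper, whose ``proof'' of this theorem is precisely the preceding development of \S 2: the no-carry concatenation argument via $\delta$ and the bijection $\beta$ giving $HW(M) = w(N_0 + t^k)$, followed by the identification $M = {\mathcal C}_k(c_0,t,w) + \sum \mathcal{A}_k$ through the geometric-series identity for $\prod_i G_{r_i}$. You have merely made explicit the bookkeeping the paper leaves implicit (and correctly flagged the $u$-versus-$w$ typo), so there is nothing to add.
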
 

In this paper we will occasionally use the following observation:
\begin{lemma}
    Let $E(\vec n, \vec x)$ be an exponential polynomials, simple in $\vec x$. Suppose that 
    $$E(\vec n, \vec x) = \sum m_i^+(\vec n, \vec x) - \sum m_j^-(\vec n, \vec x),$$
    where all exponential monomials take non-negative values, and the subtraction sign represents algebraic subtraction, and not truncated subtraction. Then there exists an exponential Diophantine equation $F(\vec n, \vec x, \vec y) = 0$, which has the following properties:
    \begin{enumerate} 
        \item $F(\vec n, \vec x, \vec y)$ is simple in $(\vec x, \vec y)$. 
        \item For all $\vec n \in \mathbb N^m$, $\vec x \in \mathbb N^k$ and $\vec y \in \mathbb N^n$, $F(\vec n, \vec x, \vec y) \geq 0$.
        \item If $E(\vec n, \vec x)$ is an algebraic sum of $m$ monomials, $F(\vec n, \vec x)$ is an algebraic sum of $3m+3$ monomials. 
        \item If $F(\vec n, \vec x, \vec y) = 0$, then $E(\vec n, \vec x) = 0$.
        \item If $E(\vec n, \vec x) = 0$, then there exists a unique $\vec y \in \mathbb N^n$ such that $F(\vec n, \vec x, \vec y) = 0$. 
    \end{enumerate}
\end{lemma}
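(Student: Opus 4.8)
The plan is to reduce everything to the single fact that $E(\vec n, \vec x) = 0$ if and only if the two non-negative quantities $P := \sum_i m_i^+(\vec n, \vec x)$ and $N := \sum_j m_j^-(\vec n, \vec x)$ are equal, and then to manufacture a non-negative $F$ whose vanishing is equivalent to $P = N$ while keeping the number of monomials linear in $m$. The naive candidate $F = (P - N)^2$ already satisfies non-negativity (property 2), the implication $F = 0 \Rightarrow E = 0$ (property 4), simplicity in $\vec x$ (property 1, since a product of two simple monomials $c\,v_1^{x_1}\cdots x_k^{r_k}$ and $c'\,w_1^{x_1}\cdots x_k^{r_k'}$ is again simple, with bases $v_i w_i$ and exponents $r_i + r_i'$), and even the uniqueness clause (property 5) with $\vec y$ empty. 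The only defect is property 3: expanding $(P-N)^2$ produces up to $\binom{m+1}{2}$ distinct monomials, which is quadratic rather than linear in $m$. So the real work is to trade this quadratic blow-up for a handful of auxiliary variables $\vec y$.

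First I would introduce, for every monomial of $E$, a fresh variable naming its value: a variable $y_k$ forced to equal $m_k^+$ (respectively $m_k^-$) by the squared relation $(y_k - m_k^{\pm})^2$. Each such square expands into exactly three simple monomials, namely the square of the monomial, the square of the naming variable, and their cross term, so this stage contributes $3m$ monomials; it is a sum of squares, hence non-negative, and it is simple in $(\vec x, \vec y)$ because a fresh variable $y$ (read as $1^{y} y^{1}$) multiplied by a simple monomial of $\vec x$ is again a simple exponential monomial. Once the monomials are named, the equation $E = 0$ becomes the purely linear balance $\sum_i y_i^+ = \sum_j y_j^-$ among the new variables, and $F$ is obtained by adding to the $3m$ naming-squares a non-negative encoding of this balance. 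Properties 1 and 2 then follow for the whole $F$ exactly as for each summand.

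For properties 4 and 5 I would argue from the sum-of-squares shape. If $F(\vec n, \vec x, \vec y) = 0$ then every square vanishes, so each $y_k$ is forced to equal the monomial it names and the balance relation holds; therefore $P = N$ and $E(\vec n, \vec x) = 0$. Conversely, if $E(\vec n, \vec x) = 0$ then $P = N$, the intended values of the naming variables are non-negative integers (being the non-negative monomials $m_k^{\pm}$ themselves), and setting each $y_k$ to that value, together with the values dictated by the balance gadget, produces a point of $\mathbb N^n$ annihilating $F$; since the relations force this choice, the witness $\vec y$ is unique. This is precisely property 5, and it is also what makes the construction count-preserving, so that Mazzanti's method applied to $F$ over a suitable box counts exactly the $\vec x$-solutions of $E = 0$.

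The main obstacle is property 3, and specifically the balancing stage: I must encode $\sum_i y_i^+ = \sum_j y_j^-$ by a non-negative simple polynomial that adds only three further monomials, because squaring this balance directly would reintroduce a quadratic number of cross terms and destroy the linear bound. The delicate point is therefore to arrange the accumulation of the named values, using a small fixed number of extra accumulator variables chained so that consecutive steps share or cancel their squared and cross monomials, in such a way that the total overhead beyond the $3m$ naming-squares is exactly $3$, yielding the stated count $3m + 3$. Fixing this gadget and accounting honestly for coincident monomials across the chain is where the real care lies; the simplicity, non-negativity, and unique-witness verifications are then the routine checks sketched above.
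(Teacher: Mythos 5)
Your proposal reproduces the first stage of the paper's construction exactly: one fresh variable per monomial, forced by a squared relation $(y_k - m_k^{\pm})^2 = 0$ that expands into three simple monomials, for a total of $3m$, after which $E = 0$ reduces to the linear balance $\sum y_i^+ = \sum y_j^-$ among the new variables. You also correctly identify that the entire difficulty is to encode this balance with only three additional monomials, since squaring it directly reintroduces a quadratic number of cross terms. But at precisely that point you stop: you gesture at ``a small fixed number of extra accumulator variables chained so that consecutive steps share or cancel their squared and cross monomials'' and admit that ``fixing this gadget \dots is where the real care lies.'' That gadget is the actual content of the lemma, and the accumulator idea does not deliver it --- each link of such a chain is itself a squared linear relation costing at least three monomials (six, if it relates three quantities), so the overhead grows linearly with $m$ rather than being the constant $3$.

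The missing idea is the one exponential trick that this whole paper leans on: a linear equation among non-negative integers can be replaced by an equality of powers of two. Since $\sum y_i = \sum y_j$ holds if and only if $2^{\sum y_i} = 2^{\sum y_j}$, the balance is encoded by the single squared term
$$\bigl( 2^{\sum y_i} - 2^{\sum y_j} \bigr)^2 = 2^{2\sum y_i} - 2\cdot 2^{\sum y_i + \sum y_j} + 2^{2\sum y_j},$$
and each of these three summands is one simple exponential monomial in $\vec y$ (for instance $2^{\sum y_i} = \prod_i 2^{y_i}$ is a product of constant-base exponentials of single variables, hence a single simple monomial --- exactly the reason the lemma is stated for exponential rather than ordinary polynomials). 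The paper's $F$ is the sum of your $m$ naming squares plus this one term, giving $3m+3$ on the nose; your verifications of properties 1, 2, 4 and 5 then go through unchanged. The same device reappears throughout \cref{Section5} and \cref{Section7}, where relations such as $(n-k-v)^2 = 0$ are systematically rewritten as $(2^n - 2^{k+v})^2 = 0$, so the gap here is not a technicality but the central mechanism of the construction.
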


\begin{proof}
    Of course, if we replace $E(\vec n, \vec x)$ with $F(\vec n, \vec x) = E(\vec n, \vec x)^2$, all conditions are fulfilled, excepting the number of monomials. Instead of this, we introduce for every monomial another new variable $y_i$ by an equation
    $$(y_i - m_i(\vec n, \vec x))^2 = 0,$$
    for monomials with positive sign, respectively
    $$(y_j - m_j(\vec n, \vec x))^2 = 0,$$
    for monomials with negative sign.
    Now we observe that:
    $$ \sum m_i^+(\vec n, \vec x) - \sum m_j^-(\vec n, \vec x) = 0 $$
    is equivalent with:
    $$ \sum y_i - \sum y_j = 0,$$
    respectively with
    $$ ( 2^{\sum y_i} - 2^{\sum y_j} )^2 = 0. $$
    So we define:
    $$F(\vec n, \vec x, \vec y) = \sum (y_i - m_i(\vec n, \vec x))^2 + \sum (y_j - m_j(\vec n, \vec x))^2 + ( 2^{\sum y_i} - 2^{\sum y_j} )^2.  $$
\end{proof} 

One will observe that the terms constructed in the next Sections do not contain only exponentiations with basis $2$, like $2^x$, but also some other constant-base exponentiations like $11^x$ or $12^x$. This is not a problem, as by the identity:
$$x^y = 2^{(xy + x + 1)y} \bmod (2^{xy + x + 1} - x),$$
such exponentiations can be substituted by other arithmetic terms that contain only the function $2^x$.

\section{C-recursive sequences}\label{Section3} 

A \textbf{C-recursive sequence of order $d$} is a sequences $t : \mathbb N \rightarrow \mathbb C$ such that for all $n \geq 0$ one has:
$$
t(n+d) + a_1 t(n+d-1) + \dots + a_{d-1} t(n+1) + a_d t(n) = 0.
$$
The following polynomial is associated with the recurrence rule:
$$
B(x) := 1 + a_1 X + \dots + a_d X^d .
$$

By Theorem 4.1.1 in \cite{stanley}, and Theorem 1 in \cite{PetkovsekZakrajsek}, a sequence is C-recursive if and only if its  generating function:
$$f(z) = \sum_{n \geq 0} t(n) z^n$$
is a rational function $A(z)/B(z)$ with $\deg(A) < \deg(B) = d$. 

The next Lemma has been also given in Everest and al \cite{everestandal}.

\begin{lemma}\label{LemmaGeneralInequality} [Prunescu \& Sauras-Altuzarra, Lemma 4, \cite{prunescusauras2024representationcrecursive}]
If $ s : \mathbb{N} \rightarrow {\mathbb{C}} $ is C-recursive, then there is an integer $ g \geq 1 $ such that $ | t(n) | < g^{n+1} $ for every integer $ n \geq 0 $. \end{lemma}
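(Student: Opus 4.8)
The plan is to prove the bound by a direct induction on $n$, using only the recurrence relation; the characterization via rational generating functions is not needed here, and the elementary argument is shorter and self-contained. Writing the recurrence as $t(n+d) = -a_1 t(n+d-1) - \cdots - a_d t(n)$, the triangle inequality gives $|t(n+d)| \le C\sum_{i=0}^{d-1} |t(n+i)|$, where $C := \max(1, |a_1|, \dots, |a_d|)$. Let $A := \max_{0 \le j \le d-1} |t(j)|$ bound the $d$ initial values, and choose the integer $g$ to satisfy $g > \max(A, Cd)$ (so in particular $g > 1$). I claim that $|t(n)| < g^{n+1}$ holds for every $n \ge 0$ with this $g$.

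First I would dispatch the base cases $n = 0, 1, \dots, d-1$. Since $g \ge 1$ and $n+1 \ge 1$, we have $g^{n+1} \ge g > A \ge |t(n)|$, so the strict inequality holds at each initial index. For the inductive step, assuming $|t(m)| < g^{m+1}$ for all $m < n+d$ with $n \ge 0$, I would estimate
$$|t(n+d)| \le C\sum_{i=0}^{d-1} |t(n+i)| < C\sum_{i=0}^{d-1} g^{n+i+1} = C\,g^{n+1}\,(1 + g + \cdots + g^{d-1}).$$
Bounding the geometric sum crudely by $1 + g + \cdots + g^{d-1} \le d\,g^{d-1}$ gives $|t(n+d)| < Cd\,g^{n+d}$, and then $Cd < g$ yields $|t(n+d)| < g^{n+d+1}$, which closes the induction. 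Hence $|t(n)| < g^{n+1}$ for all $n \ge 0$.

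The only delicate point is the choice of $g$: it must simultaneously dominate the initial data (through $A$) and the amplification caused by the recurrence (through $Cd$), and the geometric-sum estimate must absorb the extra factor $g^d$ that appears when the index advances by the full order $d$. This is exactly what the margin $g > Cd$ secures, and the coarse bound $1 + g + \cdots + g^{d-1} \le d g^{d-1}$ is all that is needed. An alternative route would factor the characteristic polynomial $B(X)$ and express $t(n)$ by partial fractions as a sum $\sum_i p_i(n)\lambda_i^n$ of polynomial-times-geometric terms, then absorb the polynomial factors $p_i(n)$ into a slightly enlarged exponential base; but this requires the full root structure of $B$ and separate care for repeated roots, so I would prefer the induction above, which avoids all of that machinery.
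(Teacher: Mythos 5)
Your induction is correct, and it is worth noting that the paper itself gives no proof of this lemma at all: it is imported verbatim as Lemma 4 of the cited Prunescu--Sauras-Altuzarra paper, so there is no internal argument to compare against. Your elementary route is self-contained and sound: the triangle inequality gives $|t(n+d)| \le C\sum_{i=0}^{d-1}|t(n+i)|$ with $C=\max(1,|a_1|,\dots,|a_d|)$, the base cases $0\le n\le d-1$ follow from $g>A$, and in the inductive step the chain $C\sum_{i=0}^{d-1}g^{n+i+1}\le C\,g^{n+1}\cdot d\,g^{d-1}=Cd\,g^{n+d}<g^{n+d+1}$ closes correctly because $g>Cd$ and $g\ge 1$ makes each summand of the geometric sum at most $g^{d-1}$. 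The choice of an integer $g$ exceeding both $A$ and $Cd$ is legitimate even though $A$ and $C$ are real (take a ceiling), and the argument applies verbatim to complex coefficients and complex values since only moduli are used. The alternative you sketch --- partial fractions of $A(z)/B(z)$ giving $t(n)=\sum_i p_i(n)\lambda_i^n$ and absorbing the polynomial factors into a larger base --- is essentially the generating-function viewpoint the source paper works in, but it needs the root structure of $B$ and care with repeated roots; your induction buys the same conclusion with strictly less machinery. The only cosmetic mismatch is the statement's own typo ($s$ versus $t$ for the sequence), which you silently and reasonably resolve in favour of $t$.
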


\begin{theorem}\label{ThmExtraction1} [Prunescu \& Sauras-Altuzarra, Theorem 1 and 2, \cite{prunescusauras2024representationcrecursive}] If $ t : \mathbb N \rightarrow \mathbb{N} $, $f(z)$ is its generating function, $ R $ is the radius of convergence of $ f(z) $ at zero and $ c $, $ m $ and $ n $ are three integers such that $ c \geq 2 $, $ n \geq m \geq 2 $, $ c^{- m} < R $ and $ t ( n ) < c^{n - 2} $ for every integer $ n \geq m $, then $$t(n) = \floor{ c^{n^2} f ( c^{- n} ) } \bmod c^n . $$ 
Also, if $ c \geq 8 $, $c^{-1} < R$ and $ t ( n ) < c^{n / 3} $ for every  $ n \geq 1 $, then the representation works for every $n \geq 1$. 
\end{theorem}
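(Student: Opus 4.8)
The plan is to evaluate the generating function at $z=c^{-n}$ and to read off $t(n)$ as a single ``digit'' in the scale of $c^n$. Writing $f(z)=\sum_{k\geq 0} t(k)z^k$, multiplying by $c^{n^2}$ and splitting the series at the index $k=n$ gives
$$c^{n^2}f(c^{-n}) = \sum_{k=0}^{n-1} t(k)\,c^{n(n-k)} \;+\; t(n) \;+\; \sum_{k>n} t(k)\,c^{-n(k-n)}.$$
I denote the three pieces by $H$, by the exact middle term $t(n)$, and by the tail $T$. Since $t$ takes values in $\mathbb N$, all summands are non-negative, so the rearrangement is legitimate as soon as convergence is in place; and $c^{-n}\leq c^{-m}<R$ (using $n\geq m$ and $c\geq 2$) guarantees that $f(c^{-n})$ converges absolutely.

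First I would note that $H$ is a non-negative integer and that each of its summands is divisible by $c^n$, because $n(n-k)\geq n$ whenever $k\leq n-1$; hence $H\equiv 0 \pmod{c^n}$. The middle term satisfies $0\leq t(n)<c^{n-2}<c^n$ by hypothesis, so it already lies in the residue range modulo $c^n$. The crux is the tail estimate: inserting the growth bound $t(n+j)<c^{(n+j)-2}$, valid since $n+j\geq m$, into $T=\sum_{j\geq 1} t(n+j)\,c^{-nj}$ produces a geometric series,
$$T < c^{\,n-2}\sum_{j\geq 1} c^{\,j(1-n)} = \frac{c^{-1}}{1-c^{\,1-n}} = \frac{1}{\,c-c^{\,2-n}\,}.$$
For $c\geq 2$ and $n\geq 2$ one has $c^{\,2-n}\leq 1$, so the denominator is at least $c-1\geq 1$ and therefore $0\leq T<1$. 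It follows that $\floor{c^{n^2}f(c^{-n})}=H+t(n)$, an integer, and reducing this modulo $c^n$ annihilates $H$ and returns $t(n)$, which is the asserted identity.

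For the second statement I would rerun the identical split, now feeding in the weaker bound $t(n+j)<c^{(n+j)/3}$. The same summation yields $T< c^{n/3}\,c^{\,1/3-n}/(1-c^{\,1/3-n})$, and the role of strengthening the hypothesis to $c\geq 8$ is exactly to keep $T<1$ even in the worst case $n=1$, where $T<c^{-1/3}/(1-c^{-2/3})\leq (1/2)/(3/4)<1$. Since $t(n)<c^{n/3}<c^n$ continues to hold for every $n\geq 1$, the same floor-then-reduce step extends the representation down to $n=1$.

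The step I expect to be the main obstacle is the tail bound at the boundary of the parameter range: one must check that the geometric estimate remains \emph{strictly} below $1$ precisely at the extremal admissible values ($c=2,\,n=2$ in the first claim, and $c=8,\,n=1$ in the second), where the inequalities are tightest. The remaining ingredients — absolute convergence at $c^{-n}$, divisibility of $H$ by $c^n$, and the final reduction modulo $c^n$ — are routine once the tail is under control.
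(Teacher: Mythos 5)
Your argument is correct, and it is essentially the standard digit-extraction proof: the paper itself does not reprove this theorem (it imports it from Prunescu and Sauras-Altuzarra \cite{prunescusauras2024representationcrecursive}), and the proof given there proceeds exactly as you do, by splitting $c^{n^2}f(c^{-n})$ into a part divisible by $c^n$, the exact term $t(n)$, and a tail shown to be strictly less than $1$ via the geometric series. Your handling of the extremal cases ($c=2,\,n=2$ and $c=8,\,n=1$), where strictness of the tail bound follows from the strict term-by-term inequality $t(n+j)<c^{n+j-2}$ (resp.\ $t(n+j)<c^{(n+j)/3}$), is the only delicate point and you address it correctly.
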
 

 In conclusion, if the sequence $s(n)$ is C-recurrent, its generating function $f$ is a rational function $A(x) / B(x)$ with $A(x), B(x) \in \mathbb Z[x]$, $\deg A(x) < \deg B(x) = d$. Moreover, we can choose the polynomials  such that both take positive values for positive $x$ inside the series' disk of convergence around $0$. In conclusion, the sequence $s(n)$ has  {\bf div-mod representation}:
\begin{align*}
s(n) = \floor{ \frac{c^{n^2 + dn} A(c^{-n})}{c^{dn} B(c^{-n})} } \bmod c^n = 
\floor{ \frac{c^{n^2} \tilde A(c^{n})}{ \tilde B(c^n)} } \bmod c^n.    
\end{align*}

\section{Lucas-Lehmer Test }\label{Section4}

The recurrence equation $s(n+1) = s(n)^2-2$ belongs to the doubly exponential sequences studied by Aho and Sloane in \cite{ahosloane}. The closed form presented here is different from the irrational closed forms constructed in \cite{ahosloane} and is based on C-finite sequences, respectively on the fact that this sequence proves to be related to the Pell equation, see \cite{jones}. 

Let $ s $ be the unary operation on the positive integers such that $ s ( 1 ) = 4 $ and $ s ( n + 1 ) = s ( n )^2 - 2 $ for every integer $ n \geq 1 $.

Note that $ s ( n ) = \textrm{A003010} ( n - 1 ) $ for every integer $ n \geq 1 $ (see \href{https://oeis.org/A003010}{\texttt{OEIS A003010}}, \cite{A003010}). In particular, $ s ( 1 ) = 4 $, $ s ( 2 ) = 14 $, $ s ( 3 ) = 194 $, $ s ( 4 ) = 37634 $, $ s ( 5 ) = 1416317954 $, and $ s ( 6 ) = 2005956546822746114 $.

Note also that every term of $ s $ is even.

Let $ x $ be the sequence of integers $ X > 0 $ such that, for some integer $ Y \geq 0 $, the pair $ ( X , Y ) $ is a solution of the Pell's equation $ X^2 - 3 Y^2 = 1 $.

Note that $ x ( n ) = \textrm{A001075} ( n ) $ for every integer $ n \geq 0 $ (see \href{https://oeis.org/A001075}{\texttt{OEIS A001075}}, \cite{A001075}). In particular, $ x ( 0 ) = 1 $, $ x ( 1 ) = 2 $, $ x ( 2 ) = 7 $, $ x ( 3 ) = 26 $, $ x ( 4 ) = 97 $, $ x ( 5 ) = 362 $, $ x ( 6 ) = 1351 $, and $ x( 7 ) = 5042 $.

In Prunescu and Sauras-Altuzarra \cite{prunescusauras2024representationcrecursive} it is shown   that if a Pell equation $x^2 - ky^2 = 1$ (where $k \in \mathbb N$ is not a square) has the trivial solution $(x(0), y(0)) = (1, 0)$ and the fundamental solution $(x(1), y(1))$, then both sequences $x(n)$ and $y(n)$ obey the recurrence rule:
$$w(n+2) - 2x(1)w(n+1) + w(n) = 0,$$
and the generating function of $x(n)$ will be:
$$f(z) = \frac{1 - x(1)z}{1 - 2x(1) z + z^2}.$$
For the Pell equation $x^2 - 3 y^3 = 1$ we find the fundamental solution $(x(1), y(1)) = (2, 1)$. We observe that the expression given by \cref{ThmExtraction1} well represents the sequence $x(n)$ for $n \geq 1$ if $b \geq 11$. We apply \cref{ThmExtraction1} and we obtain: 

\begin{proposition} For all $ n \geq 1 $, we have that $$ x(n) = \floor{\frac{11^{n^2 + 2 n} - 2 \cdot 11^{n^2 + n}}{11^{2 n} - 4 \cdot 11^n + 1}} \bmod{11^n} . $$ \end{proposition}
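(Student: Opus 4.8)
The goal is to show that the sequence $x(n)$ of solutions to the Pell equation $X^2 - 3Y^2 = 1$ has the stated div-mod representation. The plan is to assemble this directly from the machinery already set up in the excerpt, principally \cref{ThmExtraction1} together with the generating function formula quoted from \cite{prunescusauras2024representationcrecursive}.

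First I would record the generating function. Substituting the fundamental solution $(x(1), y(1)) = (2,1)$ into the quoted formula gives
$$f(z) = \frac{1 - 2z}{1 - 4z + z^2},$$
so here $A(z) = 1 - 2z$ and $B(z) = 1 - 4z + z^2$ with $\deg A = 1 < \deg B = 2 = d$. Next I would instantiate the div-mod representation from \cref{Section3}: with base $c = 11$ and $d = 2$, the general template
$$s(n) = \floor{ \frac{c^{n^2 + dn} A(c^{-n})}{c^{dn} B(c^{-n})} } \bmod c^n$$
becomes, after clearing the negative powers of $11$ in numerator and denominator,
$$x(n) = \floor{ \frac{11^{n^2 + 2n}(1 - 2\cdot 11^{-n})}{11^{2n}(1 - 4\cdot 11^{-n} + 11^{-2n})} } \bmod 11^n = \floor{\frac{11^{n^2 + 2n} - 2\cdot 11^{n^2 + n}}{11^{2n} - 4\cdot 11^{n} + 1}} \bmod 11^n,$$
which is exactly the claimed expression. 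This is a routine algebraic simplification.

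The substantive work is verifying the hypotheses of \cref{ThmExtraction1} so that the representation is licensed. I would take $c = 11$ and check: (a) the radius of convergence condition $c^{-m} < R$, where $R$ is the distance from $0$ to the nearest pole of $f$; since the poles are the reciprocals of the roots of $1 - 4z + z^2$, i.e. at $z = 2 \pm \sqrt{3}$, the smaller pole is $2 - \sqrt{3} \approx 0.27$, so I must confirm $11^{-1} \approx 0.09 < 0.27 = R$, which holds; and (b) the growth bound $x(n) < c^{n-2}$ (or the alternative $c^{n/3}$ bound) for the relevant range of $n$. The growth rate of $x(n)$ is governed by the dominant root $2 + \sqrt 3 \approx 3.73$, so $x(n)$ grows like $(2+\sqrt3)^n$, and I would need to verify that $(2+\sqrt3)^n$ stays below $11^{n-2}$ for $n \geq m$ with a suitable small $m$, checking the first few values $x(0)=1, x(1)=2, x(2)=7, \ldots$ by hand to pin down $m$ and confirm the bound starts holding early enough to cover all $n \geq 1$.

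The main obstacle is the bookkeeping around exactly which form of \cref{ThmExtraction1} to invoke and for which starting index, since the theorem offers two regimes: the first requires $n \geq m \geq 2$ with $t(n) < c^{n-2}$, while the second (for $c \geq 8$, here satisfied by $c = 11$) upgrades validity to all $n \geq 1$ provided $t(n) < c^{n/3}$ and $c^{-1} < R$. Because the statement claims the formula for all $n \geq 1$, I expect the cleanest route is the second regime: I would verify $11^{-1} < 2 - \sqrt 3$ and the stronger-looking but numerically comfortable bound $x(n) < 11^{n/3}$ for every $n \geq 1$, comparing the growth constant $2 + \sqrt 3 \approx 3.73$ against $11^{1/3} \approx 2.22$. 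Since $3.73 > 2.22$ this naive comparison fails for large $n$, so the delicate point is that the second regime's hypothesis does \emph{not} hold here and one must instead use the first regime and then separately argue the small cases; reconciling the claimed range $n \geq 1$ with the available bound is precisely where care is needed, and I would resolve it by taking $m = 2$, checking $x(n) < 11^{n-2}$ directly for small $n$ and then for all larger $n$ via the dominant-root estimate, and finally verifying the boundary case $n = 1$ by hand against the explicit formula.
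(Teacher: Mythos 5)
Your overall route is the paper's own: the paper gives no written proof of this proposition beyond the remark that one applies \cref{ThmExtraction1} to the generating function $f(z)=\frac{1-2z}{1-4z+z^2}$ with base $c=11$, and your derivation of the displayed expression from the div-mod template (clearing the negative powers of $11$ from $A(11^{-n})/B(11^{-n})$) is exactly the intended computation. You also correctly identify the point the paper glosses over: the second regime of \cref{ThmExtraction1} does \emph{not} apply here, because $x(n)=\tfrac12\bigl((2+\sqrt3)^n+(2-\sqrt3)^n\bigr)$ grows like $3.73^n$, which outstrips $11^{n/3}\approx 2.22^n$; indeed already $x(2)=7>11^{2/3}\approx 4.95$. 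So the claim ``valid for all $n\ge 1$'' cannot be read off from that clause, and one is forced into the first regime plus a finite check.

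However, your proposed repair contains a numerical error that would make the cited hypothesis false as invoked. You cannot take $m=2$: the first regime requires $x(n)<11^{n-2}$ for \emph{every} $n\ge m$, and this fails at $n=2$ (since $x(2)=7\not<11^{0}=1$) and at $n=3$ (since $x(3)=26\not<11^{1}=11$). The smallest admissible choice is $m=4$, where $x(4)=97<121=11^{2}$, after which the recurrence $x(n+1)=4x(n)-x(n-1)<4\,x(n)$ propagates the bound $x(n)<11^{n-2}$ to all $n\ge 4$ by induction. Consequently the theorem only licenses the formula for $n\ge 4$, and you must verify $n=1$, $n=2$ and $n=3$ by direct evaluation, not just the single boundary case $n=1$ as you state. (These checks do succeed: for $n=1$ the expression gives $\lfloor 1089/78\rfloor \bmod 11 = 13 \bmod 11 = 2 = x(1)$, and for $n=2$ it gives $\lfloor 210815759/14158\rfloor \bmod 121 = 14890 \bmod 121 = 7 = x(2)$.) With $m=4$ and the three hand computations the argument closes; as written, with $m=2$, the hypothesis of \cref{ThmExtraction1} simply does not hold.
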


The following Lemma was given by Jones in \cite{jones}, Lemma 3.2.

\begin{proposition} For all $ n \geq 1 $, we have that $ s ( n ) = 2 x ( 2^{n - 1} ) $. \end{proposition} 

\begin{proof}
    As we observed that all elements of $s(n)$ are even, define the sequence $v(n) = s(n)/2$ and observe that $v(n)$ satisfies the conditions $v(1) = 2$ and $v(n+1)^2 = 2v(n)^2 - 1$. Also, for the Pell equation $x^2 - (a^2 - 1) y^2 = 1$ it is known that $x(2n) = 2x(n)^2 - 1$ and $x(1) = a$. For $a = 2$ one gets the Pell equation $x^2 - 3 y^2 = 1$ and $x(1) = 2$, so $x(2^n) = x(2^{n-1})^2 -1$ and $x(2^{1-1}) = 2$. The sequences $v(n)$ and $x(2^{n-1})$ satisfy the same recurrence with the same initial condition, so therefore are  identical. 
\end{proof} 

\begin{corollary}
    For all $n \geq 1$, we have that
    $$s(n) = 2 \cdot \left ( \left \lfloor \frac{11^{2^{2n-2} + 2^n} - 2 \cdot 11^{2^{2n-2}+2^{n-1}}}{11^{2^n} - 4 \cdot 11^{2^{n-1}} + 1} \right \rfloor \bmod 11^{2^{n-1}}\right ).$$
\end{corollary}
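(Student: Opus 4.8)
The plan is to obtain the stated closed form by simply composing the two preceding Propositions: the explicit arithmetic-term representation of $x(n)$, valid for all $n \geq 1$, and the identity $s(n) = 2x(2^{n-1})$. The entire content of the Corollary is the remark that the formula for $x$ may legitimately be evaluated at the argument $2^{n-1}$, because $2^{n-1} \geq 1$ for every $n \geq 1$, so the hypothesis ``$n \geq 1$'' under which the representation of $x$ was proved is automatically satisfied when that representation is instantiated at $2^{n-1}$. Thus no new estimate or convergence argument is needed; the Corollary is a pure substitution.

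Concretely, I would first write $s(n) = 2x(2^{n-1})$ and then set $m := 2^{n-1}$ in
$$x(m) = \floor{\frac{11^{m^2 + 2m} - 2 \cdot 11^{m^2+m}}{11^{2m} - 4 \cdot 11^m + 1}} \bmod 11^m.$$
The only step requiring attention is the exponent bookkeeping: one records $m^2 = 2^{2n-2}$, $2m = 2^n$, and $m = 2^{n-1}$, so that $m^2 + 2m = 2^{2n-2} + 2^n$, $m^2 + m = 2^{2n-2} + 2^{n-1}$, the denominator term $11^{2m}$ becomes $11^{2^n}$, the term $4 \cdot 11^m$ becomes $4 \cdot 11^{2^{n-1}}$, and the outer modulus $11^m$ becomes $11^{2^{n-1}}$. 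Inserting these into the displayed formula and multiplying by the leading factor $2$ reproduces the right-hand side of the Corollary verbatim.

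There is no genuine obstacle here; the argument is one direct substitution, and the factor $2$ is supplied for free by $s(n) = 2x(2^{n-1})$. The single point worth checking is the boundary case $n = 1$, where $m = 2^0 = 1$: the formula then computes $x(1)$, and a quick numerical check (numerator $11^3 - 2\cdot 11^2 = 1089$, denominator $11^2 - 4\cdot 11 + 1 = 78$, floor of the quotient $13$, reduced mod $11$ to $2$) confirms $x(1) = 2$ and hence $s(1) = 4$, as demanded. I would close by noting that, since the base $11$ can be eliminated in favour of powers of $2$ via the Marchenkov identity $x^y = 2^{(xy + x + 1)y} \bmod (2^{xy + x + 1} - x)$, the resulting expression is a bona fide arithmetic term in $n$, which is precisely the form required for the single-fold exponential Diophantine reformulation and the counting construction carried out in the later sections.
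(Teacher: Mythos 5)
Your proof is correct and is exactly the argument the paper intends: the Corollary follows from the two preceding Propositions by substituting $m = 2^{n-1}$ into the arithmetic term for $x(m)$ and multiplying by $2$, with only the exponent bookkeeping $m^2 = 2^{2n-2}$, $2m = 2^n$, $m = 2^{n-1}$ to record. The paper gives no separate proof precisely because it is this direct substitution, and your numerical check at $n=1$ is a sensible sanity check.
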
 

The following criterion, known as Lucas-Lehmer divisibility criterion, has been used by Jones \cite{jones} and is proved in Lehmer \cite{lehmer}. 

\begin{theorem}\label{lucaslehmer} {\bf (Lucas-Lehmer Test)} Given an integer $ n \geq 3 $, we have that $ 2^n - 1 $ is prime if, and only if, $ 2^n - 1 $ divides $ s ( n - 1 ) $. \end{theorem}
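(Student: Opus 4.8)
The plan is to pass from the recurrence to a closed algebraic form and then read off primality from the multiplicative order of a single unit. Set $\omega = 2 + \sqrt{3}$ and $\bar\omega = 2 - \sqrt{3}$, so that $\omega + \bar\omega = 4$ and $\omega\bar\omega = 1$. First I would record that $s(n) = \omega^{2^{n-1}} + \bar\omega^{2^{n-1}}$ for every $n \ge 1$; this follows by a one-line induction using $(\omega^{2^{n-1}} + \bar\omega^{2^{n-1}})^2 = \omega^{2^n} + \bar\omega^{2^n} + 2$, or directly from the already proved identity $s(n) = 2x(2^{n-1})$ together with the Pell closed form $x(m) = \tfrac12(\omega^m + \bar\omega^m)$. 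Writing $M = 2^n - 1$ and noting $2^{n-1} = (M+1)/2$, the pivotal remark is that multiplying $s(n-1) = \omega^{2^{n-2}} + \bar\omega^{2^{n-2}}$ by the unit $\omega^{2^{n-2}}$ gives $\omega^{2^{n-2}} s(n-1) = \omega^{2^{n-1}} + 1$. Hence, in $\mathbb{F}_p[\sqrt{3}]$ for any odd prime $p$, the relation $p \mid s(n-1)$ is equivalent to $\omega^{2^{n-1}} \equiv -1$ (because $\omega^{2^{n-2}}$ is a unit and $s(n-1)$ is a rational integer). The theorem thus reduces to deciding the sign of $\omega^{(M+1)/2}$.

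For sufficiency, suppose $M \mid s(n-1)$ and, toward a contradiction, that $M$ is composite; let $q$ be its least prime factor, so $q \le \sqrt{M}$, and $q$ is odd. I would work in the finite ring $X_q = \mathbb{F}_q[\sqrt{3}]$, in which $\omega$ is a unit with inverse $\bar\omega$, and in which $|X_q^{\times}| \le q^2 - 1$ simply because $X_q$ has $q^2$ elements and $0$ is never invertible. From $q \mid M \mid s(n-1)$ the reduction above yields $\omega^{2^{n-1}} \equiv -1$, so $\omega^{2^n} \equiv 1$ while $\omega^{2^{n-1}} \not\equiv 1$ (here $-1 \ne 1$ as $q$ is odd); therefore $\omega$ has multiplicative order exactly $2^n$ in $X_q^{\times}$. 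By Lagrange, $2^n \le |X_q^{\times}| \le q^2 - 1 \le M - 1 = 2^n - 2$, a contradiction. Hence $M$ is prime.

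For necessity, suppose $M = 2^n - 1$ is prime with $n \ge 3$. I would first pin down two Legendre symbols: since $n \ge 3$ forces $M \equiv 7 \pmod 8$, the number $2$ is a quadratic residue mod $M$; and since $M \equiv 3 \pmod 4$ while $M \equiv 1 \pmod 3$ (the latter because $M$ prime forces $n$ prime, hence odd), quadratic reciprocity gives that $3$ is a non-residue mod $M$. Consequently $\sqrt{3} \notin \mathbb{F}_M$, and I pass to $\mathbb{F}_{M^2} = \mathbb{F}_M[\sqrt{3}]$, where the Frobenius $x \mapsto x^M$ sends $\sqrt{3} \mapsto 3^{(M-1)/2}\sqrt{3} = -\sqrt{3}$; thus $\omega^M = \bar\omega$ and $\omega^{M+1} = \omega\bar\omega = 1$. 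The decisive step is the identity $(1+\sqrt{3})^2 = 2\omega$, which lets me compute $\omega^{(M+1)/2} = (1+\sqrt{3})^{M+1}\big/ 2^{(M+1)/2}$, where $(1+\sqrt{3})^{M+1} = (1+\sqrt{3})(1-\sqrt{3}) = -2$ and $2^{(M+1)/2} = 2\cdot 2^{(M-1)/2} \equiv 2$ because $2$ is a residue. Hence $\omega^{(M+1)/2} \equiv -1$, i.e. $\omega^{2^{n-1}} \equiv -1$, which by the reduction of the first paragraph gives $M \mid s(n-1)$.

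I expect the necessity direction to be the real obstacle, and within it the quadratic-residue bookkeeping. The relation $\omega^{M+1} = 1$ only shows $\omega^{2^{n-1}} = \pm 1$; the genuine content is ruling out $+1$, and the clean way to do this is the factorization $\omega = (1+\sqrt{3})^2/2$ combined with $2$ being a residue mod $M$ — both hinging on the congruences $M \equiv 7 \pmod 8$ and the reciprocity computation for $3$. The sufficiency direction is comparatively routine once the order argument is in place, the only subtlety being that $X_q$ need not be a field when $3$ is a square mod $q$, which is handled uniformly by the trivial bound $|X_q^{\times}| \le q^2 - 1$.
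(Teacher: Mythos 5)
Your proof is correct and complete. Note, however, that the paper does not prove this theorem at all: it imports it as a known result, citing Lehmer's 1935 paper, so there is no in-paper argument to compare against. What you have written is the standard self-contained algebraic proof. The key steps all check out: the closed form $s(m)=\omega^{2^{m-1}}+\bar\omega^{2^{m-1}}$ follows by induction from $s(1)=\omega+\bar\omega=4$ and $\omega\bar\omega=1$; the identity $\omega^{2^{n-2}}s(n-1)=\omega^{2^{n-1}}+1$ correctly converts divisibility of the integer $s(n-1)$ into the congruence $\omega^{(M+1)/2}\equiv-1$ in $\mathbb{Z}[\sqrt3]/(p)$ (and the converse direction is sound because $\mathbb{Z}\cap p\,\mathbb{Z}[\sqrt3]=p\mathbb{Z}$); the order argument in the sufficiency direction with the crude bound $|X_q^\times|\le q^2-1\le M-1<2^n$ is exactly right, and you correctly sidestep the issue of $X_q$ not being a field; and in the necessity direction the residue bookkeeping is accurate ($M\equiv7\bmod 8$ gives $(2/M)=1$; $M\equiv3\bmod4$ and $M\equiv1\bmod3$ give $(3/M)=-1$ by reciprocity), after which $(1+\sqrt3)^2=2\omega$ and the Frobenius computation $\omega^M=\bar\omega$ pin down the sign of $\omega^{(M+1)/2}$ as $-1$ rather than $+1$. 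Your proof is also consistent with the paper's surrounding material: the identity $s(n)=2x(2^{n-1})$ and the Pell closed form $x(m)=\tfrac12(\omega^m+\bar\omega^m)$ that you invoke as an alternative derivation are exactly what the paper establishes in its Section 4, so your argument could be spliced in as a genuine proof where the paper currently only has a citation.
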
 

\section{Counting the Mersenne primes}\label{Section5}

Let $ \mathcal{M} $ be the sequence that maps every integer $ n \geq 0 $ into the cardinality of the set $$ \{ k \in \{ 0 , \ldots , n  \} : \textrm{$ 2^{k + 2} - 1 $ is prime} \} . $$

So for $k \geq 1$ we observe that:
$$2^{k + 2} - 1 \textrm{ is prime } \longleftrightarrow (2^{k + 2} - 1) | s(k+1) \longleftrightarrow (2^{k + 2} - 1) | x(2^k)$$
The last equivalence takes place because $2^{k + 2} - 1$ is odd, and so divides $s(k+1)$ if and only if it divides $x(2^k)$. We observe that this criterion skips the value $2^2 - 1 = 3$, so at the end we should add $1$ to the arithmetic term. 

We define the eleven-variable polynomial
\begin{align*} 
P(n,k,v,a,b,c,d,e,f,x,y) = ( a - 2^k )^2 + \\ 
(f - 2^{2k})^2 +\\
[ 11^{f + 2 a} - 2 \cdot 11^{f + a} - b ( 11^{2 a} - 4 \cdot 11^a + 1 ) - c ]^2 + \\ 
( 11^{2 a} - 4 \cdot 11^a - c - d )^2 + \\ 
( b - 11^a e - x )^2 + \\ 
( 11^a - x - y - 1 )^2 + \\ 
( 2^{k + 2} z - z - x  )^2 + \\ 
( n-k-v )^2. \end{align*}

\begin{lemma}\label{LemmaSetMersenne} Given an integer $ n \geq 0 $, we have that $ \mathcal{M} ( n ) $ is equal $1$ plus the cardinality of the set
$$\{ (k, v,  a , b , c , d , e , f,  x , y ) \in \mathbb N^{10} : P(n,k,v,a,b,c,d,e, f, x,y) = 0 \}.$$  \end{lemma}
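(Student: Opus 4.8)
The plan is to exploit the fact that $P$ is a sum of eight squares, so a tuple is a zero of $P$ precisely when all eight bracketed expressions vanish simultaneously. I would therefore read off, one square at a time, the constraint it imposes, and argue that the single index variable $k$ determines every one of the remaining auxiliary variables. The upshot is that solution tuples correspond bijectively to a well-understood set of admissible values of $k$, which I then identify with the Mersenne primes being counted (up to one boundary exception that the ``$+1$'' repairs).

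First I would decode the bookkeeping squares. The term $(a-2^k)^2$ fixes $a=2^k$ and $(f-2^{2k})^2$ fixes $f=2^{2k}$; the term $(n-k-v)^2$ forces $v=n-k$, which is solvable in $\mathbb N$ exactly when $k\le n$. Next I would decode the squares reconstructing $x(2^k)$. Writing $N=11^{f+2a}-2\cdot 11^{f+a}$ and $D=11^{2a}-4\cdot 11^a+1$, the third square says $N=bD+c$ and the fourth says $c+d=D-1$; since $c,d\ge 0$ this pins $0\le c<D$, so $b=\lfloor N/D\rfloor$ and $c=N\bmod D$ are uniquely determined. The fifth and sixth squares say $b=11^a e+x$ and $x+y+1=11^a$, forcing $0\le x<11^a$ and hence $x=\lfloor N/D\rfloor\bmod 11^a$, with $e,y$ the unique cofactors. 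Substituting $a=2^k$, $f=2^{2k}$ into the Proposition's closed form for $x$ identifies this with $x=x(2^k)$.

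It remains to interpret the seventh square $(2^{k+2}z-z-x)^2$, which asserts $x=(2^{k+2}-1)z$: for fixed $k$ this is solvable in $z$ iff $(2^{k+2}-1)\mid x(2^k)$, and when solvable the quotient $z$ is unique. By the equivalence recorded just before the Lemma, for $k\ge 1$ this divisibility is equivalent to $(2^{k+2}-1)\mid s(k+1)$, which by the Lucas-Lehmer Test (\cref{lucaslehmer}, applicable since $k+2\ge 3$) holds iff $2^{k+2}-1$ is prime. Hence each $k$ with $1\le k\le n$ and $2^{k+2}-1$ prime yields exactly one solution tuple, and no other $k$ does. Finally I would treat $k=0$ directly: there $x=x(1)=2$ while $2^{k+2}-1=3\nmid 2$, so $k=0$ contributes no solution even though $3$ is prime; this single missed Mersenne prime is precisely what the additive constant restores, giving $\mathcal M(n)=1+\#\{\text{solutions}\}$.

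The main obstacle I anticipate is not any single computation but the bookkeeping required to establish \emph{single-foldness}: I must verify that the inequality constraints hidden in the squares (namely $0\le c<D$ coming from the pair $(c,d)$, and $0\le x<11^a$ coming from the pair $(x,y)$) genuinely force the floor-and-remainder values to be attained exactly, so that no spurious tuple arises and each counted prime is counted once. A secondary point to handle cleanly is the applicability of the Proposition's formula at the argument $2^k$ for every $k\ge 0$ (it is stated for arguments $\ge 1$, and $2^k\ge 1$ throughout), together with the off-by-one at $k=0$ that motivates the constant term.
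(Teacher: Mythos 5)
Your proposal is correct and follows essentially the same route as the paper's own proof: decode the squares to pin down $a=2^k$, $f=2^{2k}$, $k\le n$, force $b$ and $x$ to be the quotient and remainder so that $x=x(2^k)$, read the seventh square as the divisibility $(2^{k+2}-1)\mid x(2^k)$, and invoke the Lucas--Lehmer Test, with the $k=0$ case accounting for the ``$+1$''. Your extra attention to the uniqueness of the auxiliary variables (and your implicit inclusion of $z$ among the unknowns, which the paper's tuple omits by an apparent typo) only makes the argument more complete; it does not change the approach.
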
 

\begin{proof}
    The squared terms must be all $0$. The last one says that $k \leq n$. The first two terms say that $a = 2^k$ and $f = 2^{2k}$. Squared terms three and four say that $b = \lfloor (11^{a^2 + 2a}- 2 \cdot 11^{a^2 + a}) / (11^{2a} - 4 \cdot 11^a + 1) \rfloor$. Squared terms five and six say that $x = b \bmod 11^a$. Altogether $x = x(2^k)$. Squared term seven says that $2^{k+2} - 1$ divides $x(2^k)$. We apply the Lucas-Lehmer Test \ref{lucaslehmer}.
\end{proof} 

We observe that $P$ is already a simple exponential polynomial. But we will apply a two-goal strategy in order to (a) diminish the number of monomials and (b) increase the number of $G_0$-instances in the final term. We introduce following variables:
\begin{eqnarray*}
    g&=& 11^{f + 2a}, \\
    h&=& 11^{f+a}, \\
    i&=& 11^a, \\
    j&=& 11^{2a}, \\
    k&=& 11^{2a} b,\\
    l &=& 11^a b, \\
    m &=& 11^a e,\\
    p &=& 2^{k+2}z.
\end{eqnarray*}
Now the squared terms are rewritten as follows. 
$$[ 11^{f + 2 a} - 2 \cdot 11^{f + a} - b ( 11^{2 a} - 4 \cdot 11^a + 1 ) - c ]^2 = 0,$$
becomes
$$( g - 2 h - k + 4l - b - c )^2 = 0,$$
$$( 2^{g+4l} - 2^{2h +k + b + c})^2 = 0. $$
Further,
$$( 11^{2 a} - 4 \cdot 11^a - c - d )^2=0,$$
becomes
$$ ( j - 4 i - c - d )^2 = 0,$$
$$ (2^j - 2^{4i+c+d})^2 = 0. $$
Also, the expression
$$ ( b - 11^a e - x )^2 = 0, $$
is equivalent with
$$ ( b - m - x )^2 = 0,$$
$$ ( 2^b - 2^{m+x})^2 = 0. $$ 
In the same way,
$$ ( 11^a - x - y - 1 )^2 = 0, $$
$$ ( i - x - y - 1)^2 = 0,$$
$$ (2^i - 2^{x+y+1})^2 = 0,$$ 
and
$$ ( 2^{k + 2} z - z - x  )^2  = 0, $$
$$ ( p - z - x )^2 = 0,  $$
$$ (2^p - 2^{x+z})^2 = 0.  $$
Finally the eighth squared term becomes:
$$ (n-k-v)^2 = 0,$$
$$ (2^n - 2^{k+v})^2 = 0. $$ 
So the equation $P(n,k,v,a,b,c,d,e,f,x,y) = 0$ is equivalent with:
$$( a - 2^k )^2 
+ (f - 2^{2k})^2 
+ (g - 11^{f + 2a})^2 
+ (h - 11^{f+a} )^2 +$$
$$  + ( i - 11^a )^2 +
    (j - 11^{2a})^2 +
    ( k - 11^{2a} b )^2 +
    (l - 11^a b)^2 +$$
$$ + ( m - 11^a e)^2 
   + ( p - 2^{k+2}z)^2 
   + ( 2^{g+4l} - 2^{2h +k + b + c})^2 
   + (2^j - 2^{4i+c+d})^2 + $$ 
$$ +( 2^b - 2^{m+x})^2
+ (2^i - 2^{x+y+1})^2 
+ (2^p - 2^{x+z})^2 
+ (2^n - 2^{k+v})^2.$$
Call the left-hand side of this equation
$$R(n,k,v,a,b,c,d, e, f, g, h, i, j, k, l, m, p, x, y) = R(n, \vec x).$$
There are $16$ squared terms that generate $48$ monomials. The corresponding sum $M(R, n, t, w)$ contains:
\begin{enumerate}
    \item $10$ products of $1$ instance of $G_2$ and $17$ instances of $G_0$,
    \item $10$ products of $1$ instance of $G_1$ and $17$ instances of $G_0$,
    \item $28$ products of $18$ instances of $G_0$ only. 
\end{enumerate}

\begin{lemma}\label{mersenne-t(n)}
    For every $n \in \mathbb N$, the solutions $\vec x \in \mathbb N^{18}$ of the equation
    $$R(n,\vec x) = 0$$
    belong to the cube $[0, t(n) - 1]^{18}$, with
    $$t(n) = 11^{2^{2n + 1}}.$$
\end{lemma}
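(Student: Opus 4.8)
The plan is to use that $R(n,\vec x)$ is a sum of perfect squares, so any solution of $R(n,\vec x)=0$ must make every summand vanish simultaneously. Reading off the squared terms one by one, this forces the following on a solution: from the last exponent-encoding term $v=n-k$, hence $0\le k\le n$; from the first two terms $a=2^{k}$ and $f=2^{2k}=a^{2}$; and from the ten ``power'' terms the exact values $g=11^{f+2a}$, $h=11^{f+a}$, $i=11^{a}$, $j=11^{2a}$, together with the product values $11^{2a}b$, $11^{a}b$, $11^{a}e$ and $2^{k+2}z$ assigned to the remaining auxiliary variables. The surviving terms encode the division defining $b$, the remainder relation $x=b\bmod 11^{a}$ with slack $y$, the estimate $c<11^{2a}-4\cdot 11^{a}+1$ with slack $d$, and the divisibility $x=(2^{k+2}-1)z$. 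First I would record all of these and note that $k\le n$ gives $a=2^{k}\le 2^{n}$ and $f=a^{2}\le 2^{2n}$.

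With these relations in hand I would bound each of the $18$ coordinates. The ``small'' variables $k,v,c,d,i,j,x,y,a,f$ and $p$ are all at most $11^{2a}\le 11^{2^{n+1}}$, which is negligible against $t(n)$. The real work concerns $b$ and the products built from it. Writing $N=11^{a^{2}+a}(11^{a}-2)$ and $D=(11^{a}-2)^{2}-3=11^{2a}-4\cdot 11^{a}+1$, one has $b=\lfloor N/D\rfloor$, and the elementary estimate
\[
11^{a^{2}}\;<\;\frac{N}{D}\;<\;2\cdot 11^{a^{2}}\qquad(a\ge 1)
\]
gives $11^{a^{2}}\le b<2\cdot 11^{a^{2}}=2\cdot 11^{f}$. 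Consequently $e=\lfloor b/11^{a}\rfloor<2\cdot 11^{f-a}$, and the three product variables satisfy $11^{a}b<2\cdot 11^{f+a}$, $11^{a}e<2\cdot 11^{f}$, and, the largest of all, $11^{2a}b<2\cdot 11^{f+2a}$. Since $g=11^{f+2a}$ as well, every coordinate is bounded above by $2\cdot 11^{f+2a}=2\cdot 11^{a^{2}+2a}$.

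It then remains to compare this with $t(n)=11^{2^{2n+1}}$, which reduces to the single exponent inequality $a^{2}+2a\le 2^{2n+1}$. Since $a=2^{k}$ with $k\le n$, this reads $2^{2k}+2^{k+1}\le 2^{2n}+2^{n+1}\le 2\cdot 2^{2n}=2^{2n+1}$. I expect this final comparison to be the main obstacle, because it is essentially tight: the dominant coordinate $11^{2a}b$ almost saturates $11^{a^{2}+2a}$, so one must check that the factor $2$ coming from the floor estimate is absorbed by the gap $2^{2n+1}-(a^{2}+2a)$. For $a\ge 4$ (that is $n\ge 2$) the binding case $k=n$ gives a gap of at least $a^{2}-2a=a(a-2)\ge 8$, so that $11^{2^{2n+1}}/(2\cdot 11^{a^{2}+2a})\ge 11^{8}/2>1$ and the strict bound $<t(n)$ holds with ample room; the finitely many smallest indices should then be inspected directly to confirm that they also land inside the cube $[0,t(n)-1]^{18}$.
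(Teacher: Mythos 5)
Your overall route is the same as the paper's: force the values of all coordinates from the vanishing of the squares, identify the dominant coordinate (of size roughly $11^{f+2a}$ with $f=2^{2k}$, $a=2^k$, $k\le n$), and compare exponents against $2^{2n+1}$. You are in fact more careful than the paper at one point: the paper simply declares $g=11^{f+2a}$ to be the largest coordinate, whereas you correctly observe that $b>11^{a^{2}}$, so the product variable $11^{2a}b$ slightly exceeds $g$, and you track the resulting factor $2$.

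The gap is the final step, which you explicitly defer: ``the finitely many smallest indices should then be inspected directly to confirm that they also land inside the cube.'' That inspection does not confirm the claim; it refutes it. For $n=1$ the equation $R(1,\vec x)=0$ does have a solution with $k=1$ (since $2^{3}-1=7$ divides $x(2)=7$), and in that solution $a=2$, $f=4$, so $g=11^{f+2a}=11^{8}=11^{2^{3}}=t(1)$, which lies outside $[0,t(1)-1]$; the coordinate $11^{2a}b=11^{4}\cdot 14890$ is even larger. Your own exponent computation already exposes the problem: at $k=n=1$ one has $a^{2}+2a=2^{2n}+2^{n+1}=2^{2n+1}$ exactly, so there is no gap left to absorb the factor $2$, nor even to get a strict inequality. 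The same defect is present in the paper's proof, whose final inequality $11^{2^{2n}+2^{n+1}}<11^{2^{2n+1}}$ is an equality at $n=1$ and reversed at $n=0$ (where, however, no solution exists, so the statement is vacuous there). As stated, the lemma is false at $n=1$ and your argument cannot be completed; it does become correct for all $n$ if one enlarges $t(n)$, e.g.\ to $11^{2^{2n+2}}$, after which your estimates go through with ample room.
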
 

\begin{proof}
    Unknowns $k$ and $v$ are $\leq n$. About the variables $z, x, y, e, d, c, b$ we know that they are remainders, complements of remainder or quotients in divisions, where the number to divide is bounded by one of the variables $g, h, i, j, k, l, m, p$. The biggest value is taken by:
    $$g = 11^{f + 2a} \leq 11^{2^{2n} + 2^{n+1}} < 11^{2^{2n+1}}.$$
\end{proof} 

\begin{lemma}\label{mersenne-w(n)}
    For every $n, t \in \mathbb N$ with $n < t$ and for every $\vec x \in  [0, t-1]^{18}$ one has:
    $$0 \leq R(n, \vec x) < 2^{36t+6}.$$
\end{lemma}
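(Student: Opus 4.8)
The goal is to bound $R(n,\vec x)$ above by $2^{36t+6}$ for every $n<t$ and every $\vec x\in[0,t-1]^{18}$. The quantity $R(n,\vec x)$ is a sum of $16$ squared terms. The plan is to bound each squared term individually and then sum the bounds. Since $R$ is a sum of squares, it is automatically $\geq 0$, which gives the lower bound immediately; the entire work lies in the upper bound.

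**The key estimate.**
First I would classify the $16$ squared terms by the magnitude of the entries they involve. Each term has the shape $(\text{term}_1-\text{term}_2)^2$, and a crude but sufficient bound is $(\alpha-\beta)^2\le\max(\alpha,\beta)^2$ when $\alpha,\beta\ge0$ (or even $(\alpha-\beta)^2\le(\alpha+\beta)^2$). So for each squared difference I only need an upper bound on the larger of the two expressions inside. The two qualitatively different kinds of terms are: (a) the ten ``definitional'' terms such as $(a-2^k)^2$, $(g-11^{f+2a})^2$, $(p-2^{k+2}z)^2$, where one side is a product or a single power with base $2$ or $11$ and exponent built from the coordinates; and (b) the six ``exponential-matching'' terms of the form $(2^{A}-2^{B})^2$, where $A$ and $B$ are linear combinations of coordinates. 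The dominant contribution comes from type (b), because the exponents $A,B$ there are themselves sums of several coordinates, each up to $t-1$, so $2^{A}$ can be as large as roughly $2^{(t-1)\cdot(\text{number of summands})}$.

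**Locating the dominant exponent.**
The crucial step is to identify, among all $16$ terms, the one whose inner expression has the largest binary logarithm, since that term forces the exponent $36t+6$. I expect the maximiser to be one of the matching terms $(2^{g+4l}-2^{2h+k+b+c})^2$ or $(2^p-2^{x+z})^2$, because the exponents $g,h,j,k,l,m,p$ are themselves coordinates that can be as large as $t-1\sim 11^{2^{2n+1}}$, not merely $n$ or a single power. For a term $(2^A-2^B)^2$ with $A,B\le 6(t-1)$ (a sum of at most six coordinates each $\le t-1$), I have $(2^A-2^B)^2<2^{2A}\le 2^{12(t-1)}$, comfortably below $2^{36t+6}$; I would therefore track the coefficients carefully to confirm that the sum of the largest coordinate-exponents in any single matching term does not exceed roughly $18t$, so that its square stays under $2^{36t+6}$. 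Summing $16$ terms each below this bound multiplies by $16<2^6$, which accounts precisely for the additive constant $+6$ in the exponent: $16\cdot 2^{36t}=2^{36t+4}<2^{36t+6}$.

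**The main obstacle.**
The delicate point is bookkeeping the exponents in the six matching terms, because the coordinates $g,h,i,j,k,l,m,p$ are \emph{not} bounded by $t-1$ through their defining equations alone but only \emph{when the defining squared terms vanish}; off the solution set they range freely over $[0,t-1]$. The estimate must hold for \emph{all} $\vec x\in[0,t-1]^{18}$, not just solutions, so I cannot substitute $g=11^{f+2a}$ etc. Thus I treat every coordinate uniformly as an integer in $[0,t-1]$ and simply count how many coordinates appear additively in each exponent $A$ or $B$. The real care is verifying that this maximal count, times $2$ (from squaring), times $t$, lands at $36t$ rather than something larger; once that is checked the factor $2^6$ absorbs both the count of terms and any slack. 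I expect this to be a routine but error-prone tally, and the proof would close by exhibiting the worst term, bounding it by $2^{36t}$, and multiplying by $16$.
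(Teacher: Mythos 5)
Your overall strategy -- bound each of the $16$ squared terms uniformly over the whole cube $[0,t-1]^{18}$, take the maximum, and absorb the factor $16$ into the $+6$ in the exponent -- is exactly the paper's strategy, and your observation that you may not use the defining equations to bound the coordinates off the solution set is the right one. But your identification of the dominant term is wrong, and the uniform per-term bound you propose is false. You claim the maximiser is one of the matching terms $(2^A-2^B)^2$ with $A,B$ sums of at most five or six coordinates, giving a bound of roughly $2^{12(t-1)}$ per term. In fact the largest term is the \emph{definitional} term $(g-11^{f+2a})^2$: with $f,a\le t-1$ the inner expression reaches $11^{3(t-1)}=2^{3(t-1)\log_2 11}\approx 2^{10.4(t-1)}$, so the square is about $2^{20.8(t-1)}$, well above your claimed $2^{12(t-1)}$ (and above every base-$2$ matching term, whose exponents are at most $5(t-1)+O(1)$, giving squares of order $2^{10t}$). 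The terms $(h-11^{f+a})^2$ and $(k-11^{2a}b)^2$ also exceed your bound. The point you missed is that a base-$11$ exponential contributes $\log_2 11\approx 3.46$ binary digits per unit of exponent, so an exponent of only $3(t-1)$ already beats a base-$2$ exponent of $5(t-1)$.

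The final inequality is still recoverable, because even the true worst term satisfies $(g-11^{f+2a})^2\le(2\cdot 11^{3t})^2=4\cdot 11^{6t}<4\cdot 2^{36t}$, and $16\cdot 4\cdot 11^{6t}\le 2^6\cdot(2^6)^{6t}=2^{36t+6}$. This is precisely the paper's computation, and it explains where the constant $36$ comes from: it is $6\cdot 6$, namely the exponent $6t$ of $11$ (after squaring $11^{3t}$) times the $6$ from the crude estimate $11<2^6$ -- not $2\cdot 18t$ from a base-$2$ matching exponent as you conjectured. As written, your proof would ``close by exhibiting the worst term'' and exhibit the wrong one, asserting an intermediate bound that fails; to repair it you only need to redo the tally with the base-$11$ terms included and note that $11^{6t}<2^{36t}$.
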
 

\begin{proof}
    Like in the previous lemma, we will value simplicity over sharpness. The polynomial $P$ is non-negative, being a sum of squares. It is dominated by a polynomial $P'$ obtained from $P$ by replacing all minus signs by plus signs. If we replace all variables with $t$, the third squared term contains the largest monomial, which is $11^{3t}$. So the largest squared term is dominated by:
    $$4 \cdot 11^{6t}.$$
    But there are $16$ squared terms, so their sum is
    $$\leq 16 \cdot 4 \cdot 11^{6t}\leq 64 \cdot 16^{6t} = 2^6 \cdot 2^{36t} = 2^{36t+6}.$$
\end{proof}
So if we replace $t$ by the arithmetic term $t(n)$ which has been previously computed, we can choose
$$w(n) := 36t(n) + 6. $$ 

\begin{theorem}
    The number of exponents $k \leq n$ such that $2^{k+2} - 1$ is prime equals:
    $$\frac{HW(M(R, n, t(n), w(n)))}{w(n)} - t(n)^{18} + 1.$$
\end{theorem}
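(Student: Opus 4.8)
The plan is to assemble the pieces established in the preceding lemmas; no genuinely new computation is required. First I would recall \cref{LemmaSetMersenne}, which already reduces the quantity to be computed: the number of exponents $k \le n$ with $2^{k+2}-1$ prime is exactly $\mathcal M(n)$, and $\mathcal M(n)$ equals $1$ plus the cardinality of the solution set of $P(n,\vec x)=0$ over $\mathbb N$. The additive constant $1$ accounts for the case $k=0$, where $2^{k+2}-1=3$ is prime but is not detected by the Lucas--Lehmer equivalence, which was set up for $k\ge 1$ and indeed fails at $k=0$ since $3 \nmid x(1)=2$. Thus it suffices to count the solutions of $P(n,\vec x)=0$ and then add $1$.

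Next I would pass from $P$ to the reformulated polynomial $R$. The chain of displayed equivalences in the text shows that $P(n,\vec x)=0$ holds if and only if $R(n,\vec x)=0$ holds on the enlarged list of $18$ unknowns. The point to make precise is that this is a cardinality-preserving bijection between solution sets: each auxiliary unknown ($g,h,i,j,\dots,p$) is pinned down by its own defining squared term to a single value ($g=11^{f+2a}$, and so on), so projecting a solution of $R=0$ onto the original unknowns yields a solution of $P=0$, and conversely every solution of $P=0$ extends uniquely to a solution of $R=0$. Hence the two solution sets have the same cardinality, and I may count the solutions of $R=0$ instead.

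The heart of the argument is then a single application of Mazzanti's counting result, \cref{proof:numberofsolutions}, to the equation $R(n,\vec x)=0$ with the number of variables $18$, the bound $t(n)=11^{2^{2n+1}}$, and the width $w(n)=36\,t(n)+6$. I would verify the four hypotheses in turn. Hypothesis (i), that $R$ is simple in $\vec x$: each summand of $R$ is the square of a difference of monomials, and expanding these squares produces only monomials of the admissible form $c\,v_1^{x_1}\cdots v_{18}^{x_{18}}x_1^{r_1}\cdots x_{18}^{r_{18}}$ with constant bases ($1,2,4,11,121,\dots$) — this is precisely what the introduction of the auxiliary variables guarantees. Hypotheses (ii) and (iii), that the coefficients and the exponential bases are arithmetic terms in $n$, are immediate, since all of them are integer constants. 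Hypothesis (iv) is \cref{mersenne-w(n)}: on the cube $[0,t(n)-1]^{18}$ one has $0\le R(n,\vec x)<2^{w(n)}$, where I first note that the side condition $n<t(n)=11^{2^{2n+1}}$ of that lemma holds trivially. Finally, \cref{mersenne-t(n)} shows that every solution of $R=0$ already lies inside this cube, so the count of lattice solutions in the cube equals the total number of solutions. \cref{proof:numberofsolutions} then evaluates this count as $HW(M(R,n,t(n),w(n)))/w(n)-t(n)^{18}$.

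Adding the constant $1$ from \cref{LemmaSetMersenne} yields the asserted formula. I expect no serious obstacle: the only points demanding care are the bijection in the second step, where one must check that the auxiliary equations neither create nor destroy solutions, and the book-keeping verification that every hypothesis of \cref{proof:numberofsolutions} is literally met — in particular that $R$, after expansion, is honestly simple in all $18$ variables. Everything quantitative, namely the values of $t(n)$ and $w(n)$ together with the non-negativity and boundedness of $R$, has already been discharged by \cref{mersenne-t(n)} and \cref{mersenne-w(n)}.
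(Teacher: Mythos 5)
Your proposal is correct and follows exactly the route the paper intends: the theorem is stated there without a separate proof precisely because it is the direct assembly of \cref{LemmaSetMersenne}, the solution-preserving passage from $P$ to $R$ via the auxiliary variables, \cref{mersenne-t(n)}, \cref{mersenne-w(n)}, and \cref{proof:numberofsolutions}, with the extra $+1$ accounting for the skipped case $k=0$ (where $2^2-1=3$ is prime but $3\nmid x(1)=2$). Your explicit verification of the hypotheses of \cref{proof:numberofsolutions} and of the bijection between the solution sets of $P=0$ and $R=0$ is exactly the book-keeping the paper leaves implicit.
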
 

\section{Pépin's Test}\label{Section6} 

Traditionally, $2 = 2^0 + 1$ is not considered a Fermat prime, but all the primes of the shape $2^{2^n} + 1$ for $n \geq 0$ are considered Fermat primes. The following criterion, known as Lucas-Lehmer divisibility criterion, has been proved and used by Jones in \cite{jones}, see also Pépin \cite{pepin}. 

\begin{theorem}[\textbf{Pépin's Test}]\label{pepin}  Let $N=2^m+1$ and $a$ such that $N \nmid a$ such that $(a/N)=-1$. Then $N$ is prime if and only if 
    $$a^{\frac{N-1}{2}} \equiv -1 \mod N.$$  \end{theorem}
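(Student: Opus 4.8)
The plan is to prove the two implications separately, using Euler's criterion for the forward direction and an order-counting argument for the converse. For the forward implication I would assume that $N$ is prime and invoke Euler's criterion: for a prime $N$ and $a$ with $N \nmid a$ one has $a^{(N-1)/2} \equiv (a/N) \pmod N$, where $(a/N)$ is the Legendre symbol. Since $(a/N) = -1$ by hypothesis, this yields $a^{(N-1)/2} \equiv -1 \pmod N$ immediately. This direction is entirely routine, and it is the only place where the hypothesis on the symbol is used.

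For the converse I would assume the congruence $a^{(N-1)/2} \equiv -1 \pmod N$ and deduce primality by controlling the multiplicative order $\mathrm{ord}_N(a)$. Squaring the congruence gives $a^{N-1} \equiv 1 \pmod N$; in particular $a$ is a unit modulo $N$, so $\gcd(a,N) = 1$ and $\mathrm{ord}_N(a)$ is well-defined and divides $N-1 = 2^m$. Since $N = 2^m + 1 \geq 3$ we have $-1 \not\equiv 1 \pmod N$, so the hypothesis shows $a^{2^{m-1}} = a^{(N-1)/2} \not\equiv 1 \pmod N$, i.e. $\mathrm{ord}_N(a) \nmid 2^{m-1}$. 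Because $\mathrm{ord}_N(a)$ divides $2^m$ but not $2^{m-1}$, it must equal $2^m = N-1$. Finally, as $\mathrm{ord}_N(a) \mid \varphi(N)$ we obtain $N-1 \mid \varphi(N)$, hence $N-1 \leq \varphi(N)$; combined with the elementary bound $\varphi(N) \leq N-1$, which holds with equality precisely when $N$ is prime, this forces $\varphi(N) = N-1$, and therefore $N$ is prime.

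The crux of the argument, and the only genuinely delicate point, is the order computation in the backward direction. The whole test works precisely because $N-1 = 2^m$ is a pure power of two: a single non-congruence at the exponent $2^{m-1}$ eliminates every proper divisor of $2^m$ as a candidate for the order, pinning $\mathrm{ord}_N(a)$ to the maximal possible value $N-1$ and thereby forcing $N$ to be prime. I would also flag one subtlety: when $N$ is not yet known to be prime, the symbol $(a/N)$ must be read as a Jacobi symbol rather than a Legendre symbol, but since the backward direction never uses the value of $(a/N)$, this interpretation issue is immaterial there; the symbol is relevant only in the forward direction, where $N$ is prime and the Legendre reading is the correct one.
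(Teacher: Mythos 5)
Your proof is correct: Euler's criterion gives the forward direction, and in the converse the congruence pins $\mathrm{ord}_N(a)$ to exactly $2^m=N-1$ (it divides $2^m$ but not $2^{m-1}$ since $-1\not\equiv 1 \bmod N$ for $N\geq 3$), whence $N-1 \mid \varphi(N)$ forces $\varphi(N)=N-1$ and $N$ prime. The paper itself gives no proof of this theorem --- it is quoted as a classical result with references to P\'epin and to Jones --- so there is nothing to compare against; your argument is the standard one, and your remark that the symbol $(a/N)$ must be read as a Jacobi symbol when $N$ is not yet known to be prime (and that this is harmless because the converse never uses its value) is a worthwhile clarification of the statement as given.
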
 

Using Pépin's Test, Jones derives in \cite{jones} a test whose implementation avoids towers of exponents. 

\begin{theorem}[\textbf{Jones' Test}]\label{jonestest} The number $6g+5$ is a Fermat prime if and only if the following conditions are fulfilled:
\begin{enumerate}
    \item $(3g+2) \mid 12^{3g+2}$,
    \item $(6g+5) \mid (12^{3g+2} + 1).$
\end{enumerate}
\end{theorem}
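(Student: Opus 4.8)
The plan is to identify the two divisibility conditions with the hypotheses and the conclusion of Pépin's test (\cref{pepin}) for the base $a=12$, and then to supply the arithmetic that makes this base admissible. First I would set $N := 6g+5$ and record the identity $\tfrac{N-1}{2} = 3g+2$, so that condition (2), namely $(6g+5)\mid(12^{3g+2}+1)$, is literally the Pépin congruence $12^{(N-1)/2}\equiv -1 \pmod N$. Next I would decode condition (1). Writing $12 = 2^2\cdot 3$, we have $12^{3g+2} = 2^{6g+4}\,3^{3g+2}$, and since $3g+2\equiv 2\pmod 3$ the number $3g+2$ is coprime to $3$; hence $(3g+2)\mid 12^{3g+2}$ holds if and only if $(3g+2)\mid 2^{6g+4}$, i.e. if and only if $3g+2$ is a power of $2$. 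Equivalently, condition (1) asserts exactly that $N-1 = 2(3g+2)$ is a power of $2$, so that $N = 2^{m}+1$ for some $m\geq 2$. This also explains the role of the base $12$: its factor $2^2$ turns condition (1) into ``$\tfrac{N-1}{2}$ is a power of $2$'', while its factor $3$ supplies a quadratic nonresidue.

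For the forward implication I would assume $N$ is a Fermat prime, so $N = 2^{2^k}+1$ with $k\geq 1$ (the value $N=3$ is excluded since $3\not\equiv 5\pmod 6$). Then $\tfrac{N-1}{2} = 2^{2^k-1}$ is a power of $2$, giving condition (1). To obtain condition (2), I would check that $12$ is a quadratic nonresidue modulo $N$: since $4$ is a square, $(12/N) = (3/N)$; because $\tfrac{N-1}{2} = 2^{2^k-1}$ is even, quadratic reciprocity gives $(3/N) = (N/3)$; and $N \equiv 2\pmod 3$ yields $(N/3) = (2/3) = -1$. With $N\nmid 12$ and $(12/N) = -1$ established, \cref{pepin} produces $12^{(N-1)/2}\equiv -1\pmod N$, which is condition (2).

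For the converse I would argue directly, since the hypothesis $(a/N)=-1$ of \cref{pepin} presupposes the primality of $N$ and so cannot be invoked as a black box for an unknown $N$. Assume both conditions hold. Condition (1) gives $N-1 = 2^{m}$, and condition (2) gives $12^{(N-1)/2}\equiv -1\pmod N$; in particular $12^{N-1}\equiv 1$, so $\gcd(12,N)=1$ and the multiplicative order $d$ of $12$ modulo $N$ is well defined. Then $d\mid N-1 = 2^{m}$ but $d\nmid \tfrac{N-1}{2} = 2^{m-1}$ (because $12^{(N-1)/2}\equiv -1\not\equiv 1$, using $N\geq 5$), and the only divisor of $2^{m}$ failing to divide $2^{m-1}$ is $2^{m}$ itself; hence $d = N-1$. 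Since $d$ divides $\varphi(N)\leq N-1$, we conclude $\varphi(N) = N-1$, so $N$ is prime. Finally $N = 2^{m}+1$ is prime with $m\geq 2$, so by the Lemma at the start of \cref{SectionSupp} we have $m = 2^k$, whence $N = 2^{2^k}+1$ is a Fermat prime.

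I expect the main obstacle to be the two admissibility computations that make the base $12$ work: reducing condition (1) to ``$\tfrac{N-1}{2}$ is a power of $2$'' via the coprimality $3\nmid 3g+2$, and evaluating $(12/N)=-1$ through quadratic reciprocity. The order-theoretic converse of Pépin's test is then routine, precisely because condition (1) forces $N-1$ to be a power of two, which is exactly what pins the order of $12$ down to $N-1$.
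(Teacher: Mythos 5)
Your proof is correct and follows the same overall route as the paper: condition (2) is identified with the Pépin congruence for the base $a=12$, and condition (1) with the assertion that $N-1=6g+4$ is a power of $2$ (via the coprimality $3\nmid 3g+2$). You go beyond the paper's very terse argument in two places, and both additions are improvements. First, the paper merely asserts that $(12/N)=-1$ when $N=2^m+1$ is prime; you supply the reciprocity computation $(12/N)=(3/N)=(N/3)=(2/3)=-1$, using that $\tfrac{N-1}{2}$ is even, which is exactly the verification needed. Second, and more substantively, the paper closes the converse by saying condition (2) ``is exactly Pépin's Test,'' i.e.\ it invokes the ``if and only if'' of \cref{pepin} as a black box; as you rightly note, the hypothesis $(a/N)=-1$ there is phrased via the Legendre symbol and presupposes knowledge about $N$ that is not available when $N$ is the unknown quantity. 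Your replacement --- the standard order argument showing that the order of $12$ modulo $N$ equals $N-1=2^m$, hence $\varphi(N)=N-1$ and $N$ is prime, followed by the lemma from \cref{SectionSupp} to force $m=2^k$ --- is self-contained and repairs this minor circularity. What the paper's version buys is brevity; what yours buys is a converse that does not lean on an ``iff'' whose hypotheses are only meaningful for prime $N$. No gaps.
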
 

\begin{proof} (by Jones)
    Jones observed that one can choose $a = 12$ in  Pépin's Test as if $N = 2^m + 1$ is prime, then Legendre's symbol $(12/N)=-1$. Also, the Fermat numbers $F_n$ with $n > 0$ satisfy $F_n = 6g+5$, because $2^{2^n} \equiv 4 \mod 6$. Since $N = 6g+5$, $3$ does not divide $N-1$. But $(N-1)/2 = 3g+2 $ divides a power of $12$. So $N-1$ is a power of $2$. Now the condition $(6g+5) \mid (12^{3g+2} + 1)$ is exactly Pépin's Test. 
\end{proof}

\section{Counting the Fermat primes}\label{Section7}

Let $ \mathcal{F} $ be the sequence that maps every integer $ n \geq 0 $ into the cardinality of the set $$ \{ g \in \{ 0 , \ldots , n \} : \textrm{$ 6g+5$ is a Fermat prime} \} . $$

We define the five-variable polynomial
$$Q(n, g, v, a, b) = ( n-g-v )^2 + [12^{3g+2} - a(3g+2)]^2 + [ 12^{3g+2} + 1 - b(6g+5) ]^2. $$

\begin{lemma}\label{LemmaSetFermat} Given an integer $ n \geq 0 $, we have that $ \mathcal{F} ( n ) $ is equal to the cardinality of the set $$ \{ (g, v, a , b ) \in \mathbb N^4 : Q(n, g, v, a, b) = 0 \} . $$ \end{lemma} 

\begin{proof}
We observe that for any such solution $(k, v, a , b ) \in \mathbb N^4 $ one has $g \leq n$ because $(n - g - v)^2 = 0$. The other two squares express Jones' Test conditions.
\end{proof} 

The polynomial $Q(n,g,v,a,b)$ is already simple and can be used to construct an arithmetic term to count the Fermat primes. But we adopt the same strategy as for the Mersenne primes. We define the new variables:
\begin{eqnarray*}
    c &=& 12^{3g+2}, \\
    d &=& ag , \\
    e &=& bg.
\end{eqnarray*} 
We rewrite the three squared terms as follows. The first squared term becomes: 
$$(2^n - 2^{g+v})^2 = 0.$$
Similarly, one has:
$$ [12^{3g+2} - a(3g+2)]^2 = 0,$$
$$( c - 3d - 2a )^2 = 0,$$
$$( 2^c - 2^{3d+2a})^2 = 0, $$
respectively
$$ [ 12^{3g+2} + 1 - b(6g+5) ]^2 = 0, $$
$$ (c + 1 - 6e - 5b)^2 = 0,$$
$$ (2^{c+1} - 2^{6e+5b})^2 = 0. $$
The modified exponential Diophantine equation reads:
$$(c - 12^{3g+2})^2 +
    (d - ag )^2 +
    (e - bg)^2 +$$
$$(2^n - 2^{g+v})^2 +
   ( 2^c - 2^{3d+2a})^2 +
   (2^{c+1} - 2^{6e+5b})^2 = 0. $$
Call the left-hand side of this equation
$$S(n, g, v, a, b, c, d, e) = S(n, \vec x).$$
There are $6$ squared terms that generate $18$ monomials, but as the monomial $2^c$ occurs two times, there will be only $17$. The corresponding sum $M(S, n, t, w)$ contains:
\begin{enumerate} 
    \item $2$ products of $2$ instances of $G_2$ and $5$ instances of $G_0$.
    \item $3$ products of $1$ instance of $G_2$ and $6$ instances of $G_0$.
    \item $2$ products of $3$ instances of $G_1$ and $4$ instances of $G_0$.
    \item $10$ products of $7$ instances of $G_0$ only. 
\end{enumerate}

\begin{lemma}\label{fermat-t(n)}
    For every $n \in \mathbb N$, the solutions $( g, v, a, b, c, d, e)$ of the equation
    $$S(n, g, v, a, b, c, d, e) = 0$$
    belong to the cube $[0, t(n) - 1]^4$, with
    $$t(n) = 12^{3n+3}.$$
\end{lemma}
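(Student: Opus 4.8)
The plan is to establish the claimed bound $t(n) = 12^{3n+3}$ by showing that every variable appearing in a solution tuple of $S(n,\vec x)=0$ is strictly smaller than this common bound. Since all six squared terms must individually vanish, I would first read off the exact value or constraint each variable satisfies, and then bound each one from above. The controlling quantity is plainly the variable $c = 12^{3g+2}$, because all the other new variables $d,e$ and the auxiliary $v,a,b$ are tied either to $g$ (which is $\leq n$) or to $c$ through divisibility or product relations. So the crux is to bound $c$ and then to argue that nothing else exceeds it.

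First I would note that the term $(2^n - 2^{g+v})^2 = 0$ forces $g + v = n$, hence $g \leq n$ and $v \leq n$, so these two variables are trivially below $t(n)$. Next, from $(c - 12^{3g+2})^2 = 0$ we get $c = 12^{3g+2} \leq 12^{3n+2} < 12^{3n+3} = t(n)$, which is the decisive estimate. The remaining variables are then controlled through $c$ and $g$: from the Jones-Test divisibility conditions, $a = 12^{3g+2}/(3g+2)$ and $b = (12^{3g+2}+1)/(6g+5)$, both of which are at most $c+1 \leq t(n)$; and the product variables satisfy $d = ag \leq c\cdot n$ and $e = bg \leq (c+1)n$. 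Here I would have to be slightly careful, since a naive bound $d \leq cn$ could in principle exceed $t(n)$; the clean way is to observe $a \leq 12^{3g+2}/(3g+2)$ and therefore $d = ag = g\,a \leq \tfrac{g}{3g+2}\,12^{3g+2} < 12^{3g+2} \leq 12^{3n+2} < t(n)$, and identically $e < t(n)$, so every variable is genuinely $< t(n)$.

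The main obstacle I expect is precisely this bounding of the product variables $d=ag$ and $e=bg$: a careless estimate multiplies the already-large $c$ by $n$ and overshoots the stated bound, so the argument must exploit that $a$ and $b$ are \emph{quotients} in which the divisors $3g+2$ and $6g+5$ exceed the multiplier $g$. Once that is noted, every bound collapses back below $12^{3g+2}$, and the uniform choice $t(n) = 12^{3n+3}$ (with its extra factor of $12$) comfortably dominates the largest value $c = 12^{3g+2} \leq 12^{3n+2}$. I would therefore structure the proof as: force each square to zero; dispose of $g,v$ via $g+v=n$; identify $c$ as the dominant term with $c \leq 12^{3n+2}$; and finish by showing $a,b,d,e$ are all bounded by $c$ (or $c+1$) using the quotient structure, concluding that the whole tuple lies in $[0,t(n)-1]^{\,7}$ — noting that the stated exponent $[0,t(n)-1]^4$ is a typographical slip for the seven (or eight, counting $n$ as a parameter) solution coordinates, which does not affect the bound itself.
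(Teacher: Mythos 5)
Your proof is correct and follows essentially the same route as the paper's: force each square to zero, get $g+v=n$ hence $g,v\leq n$, identify $c=12^{3g+2}\leq 12^{3n+2}$ as the dominant quantity, and bound $a,b,d,e$ by $c$ via the quotient structure (the paper states $d, a \leq c$ and $b < c/3$ without elaboration, and you correctly supply the detail that $d=ag\leq \tfrac{g}{3g+2}12^{3g+2}<c$, which is exactly why the naive $cn$ bound is avoidable). Your observation that the exponent $4$ in the statement should be $7$ is also right.
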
 

\begin{proof}
We see immediately that $g\leq n$, $ v \leq n$, $c \leq 12^{3n+2}$, $d$ and $a \leq c$, and 
$$b \leq \frac{c+1}{6g+5} < \frac{c}{3} < c = 12^{3n+2}.$$
\end{proof}

\begin{lemma}\label{fermat-w(n)}
    For every $n, t \in \mathbb N$ with $n < t$ and for every $( g, v, a, b, c, d, e) \in [0, t-1]^7$ one has:
    $$0 \leq S(n, g, v, a, b, c, d, e) < 2^{22t + 27}.$$
\end{lemma}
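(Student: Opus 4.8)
The plan is to follow verbatim the strategy used for \Cref{mersenne-w(n)}: the lower bound is immediate, and the upper bound is obtained by dominating $S(n,\vec x)$ by a sum of pure monomials and then converting the base-$12$ contributions into base $2$ carefully enough to preserve the coefficient $22t$.

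First, the left inequality. Since $S(n,\vec x)$ is a sum of six squares, $S(n,\vec x)\ge 0$ holds for every argument, so nothing more is needed there.

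For the upper bound I would reason as in the Mersenne case. Replacing every minus sign in the six squared terms by a plus sign yields a polynomial with non-negative monomials that dominates $S$ on the non-negative orthant, and substituting $t$ for each variable $g,v,a,b,c,d,e$ (together with $n<t$, which controls $2^n\le 2^{t-1}<2^{t}$) gives an upper estimate valid on the cube $[0,t-1]^7$. Each squared term $(A-B)^2$ is then bounded by $(A+B)^2\le 4\max(A,B)^2$, so it suffices, for every square, to pick the larger of its two halves and estimate its square. The polynomial squares $(d-ag)^2$ and $(e-bg)^2$ contribute only $O(t^4)$ and are negligible against the exponential terms. Among the exponential halves the two heaviest are $12^{3g+2}$ in the first square and $2^{6e+5b}$ in the last, whose exponents reach about $3t$ and $11t$; the intermediate halves $2^{g+v}$ and $2^{3d+2a}$, with exponents at most $2t$ and $5t$, are strictly smaller.

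The single point that requires attention is the base change for the term $12^{3g+2}$. The crude estimate $12<2^{4}$ would replace $12^{3g+2}$ by $2^{12g+8}$ and, after squaring, produce a $2^{24t}$-sized contribution, which already exceeds the target. Instead I would invoke the sharper inequality $12^{3}=1728<2048=2^{11}$, giving $12^{3g+2}=12^{3g}\cdot 144<2^{11g}\cdot 2^{8}=2^{11g+8}$, whence $(12^{3g+2})^{2}<2^{22t+O(1)}$, in line with the $2^{22t}$ arising from $(2^{6e+5b})^{2}$. With this, each of the six squared terms is bounded by $2^{22t+O(1)}$; summing the six and absorbing the factor $6<2^{3}$ together with the squaring and base-conversion constants into the exponent produces $S(n,\vec x)<2^{22t+27}$, with room to spare. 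The whole estimation is routine; the only genuine subtlety, exactly as for the Mersenne primes, is selecting the base-change inequality $12^{3}<2^{11}$ rather than $12<2^{4}$ so as not to inflate the leading coefficient of $t$.
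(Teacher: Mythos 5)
Your proposal is correct and follows essentially the same route as the paper: non-negativity from the sum-of-squares structure, domination by the all-plus polynomial evaluated at $t$, identification of $12^{3g+2}$ and $2^{6e+5b}$ as the dominant halves, and the same key base-change inequality $12^3 = 1728 < 2048 = 2^{11}$ (the paper writes it as $12^{3t+2} < 1728^{t+1} < 2^{11t+11}$), after which squaring and summing the six terms gives $6\cdot 4\cdot 2^{22t+22} < 2^{22t+27}$ exactly as you describe.
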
 

\begin{proof}
     The polynomial $S$ is non-negative, being a sum of squares. It is dominated by a polynomial $S'$ obtained from $S$ by replacing all minus signs by plus signs. If we replace all variables with $t$,  the first and the last  squared terms look bigger. While the last term contains
     $$2^{11t} = 2048^t,$$
     the first term contains
     $$12^{3t+2} < {(12^3)}^{t+1}= 1728^{t+1}.$$
     We observe that $2^{11t+11}$ is bigger than both quantities. It follows that all squared terms are less than
     $$(2 \cdot 2^{11t+11})^2 = 4 \cdot 2^{22t+22},$$
     so their sum is strictly less than
     $$6 \cdot 4 \cdot 2^{22t+22} < 2^{22t + 27}.  $$
\end{proof}
So if we replace $t$ by the arithmetic term $t(n)$ which has been previously computed, we can choose
$$w(n) := 22 t(n) + 27. $$ 

\begin{theorem}
    The number of natural numbers $0 \leq g \leq n$ such that $6g+5$ is a Fermat prime equals:
    $$\frac{HW(M(Q, n, t(n), w(n)))}{w(n)} - t(n)^{7}.$$
\end{theorem}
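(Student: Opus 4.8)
The plan is to read the final formula off \cref{proof:numberofsolutions} applied to the simple exponential polynomial $S(n,\vec x)$ built in this section (the simplified form of $Q$), and then transport the resulting count back to $\mathcal F(n)$ through \cref{LemmaSetFermat}. The whole argument is an assembly of the preparatory lemmas, so the work is in checking that the hypotheses of the master counting theorem are met.

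First I would record that the solution sets of $Q(n,g,v,a,b)=0$ and of $S(n,g,v,a,b,c,d,e)=0$ are in bijection, hence equinumerous. Setting each of the six squares of $S$ to zero forces $c=12^{3g+2}$, $d=ag$, $e=bg$, while the remaining three squares reduce to $v=n-g$, $a(3g+2)=12^{3g+2}$ and $b(6g+5)=12^{3g+2}+1$, which are exactly the defining equations of $Q$. Thus every solution $(g,v,a,b)$ of $Q=0$ extends uniquely to a solution of $S=0$, and conversely every solution of $S=0$ restricts to one of $Q=0$. Combined with \cref{LemmaSetFermat} this gives
$$\mathcal F(n)=\#\{\vec x\in\mathbb N^{7}:S(n,\vec x)=0\}.$$

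Next I would verify conditions (i)--(iv) preceding \cref{proof:numberofsolutions}, with $k=7$, $\vec x=(g,v,a,b,c,d,e)$, and the arithmetic terms $t(n)=12^{3n+3}$ and $w(n)=22\,t(n)+27$. Condition (i) holds by construction: expanding each of the six squares yields only monomials of the admissible shape, the constant bases being $2$, $12$ and their powers and all other factors pure powers of the variables. Condition (ii) is immediate, since every coefficient of $S$ is an integer constant or a power of $2^{n}$, an arithmetic term in $n$; condition (iii) holds because all exponentiation bases are constants. Condition (iv), namely $0\leq S(n,\vec a)<2^{w(n)}$ on $[0,t(n)-1]^{7}$, is precisely \cref{fermat-w(n)} specialised at $t=t(n)$, which is legitimate because $n<12^{3n+3}=t(n)$ for every $n\in\mathbb N$. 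Finally \cref{fermat-t(n)} guarantees that every solution of $S=0$ already lies inside this cube, so restricting the count to $[0,t(n)-1]^{7}$ loses nothing.

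With all four conditions in place, \cref{proof:numberofsolutions} yields that the number of $\vec x\in[0,t(n)-1]^{7}$ with $S(n,\vec x)=0$ equals
$$\frac{HW(M(S,n,t(n),w(n)))}{w(n)}-t(n)^{7},$$
and by the cardinality identity above this is exactly the number of $g\in\{0,\dots,n\}$ for which $6g+5$ is a Fermat prime, as claimed. Since the two preparatory lemmas are assumed proved, there is no genuinely hard step; the only point demanding care is the matching of hypotheses in condition (iv), where one must confirm that the single worst-case bound of \cref{fermat-w(n)} dominates every monomial of the expanded $S$ simultaneously and that substituting the large value $t=t(n)$ preserves the standing hypothesis $n<t$. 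Everything else is a mechanical transcription of \cref{fermat-t(n)} and \cref{fermat-w(n)} into the master formula.
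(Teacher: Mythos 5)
Your proposal is correct and follows exactly the route the paper intends: the theorem is stated as the direct assembly of \cref{LemmaSetFermat}, the bijection between solutions of $Q=0$ and of its simplified form $S=0$, and \cref{fermat-t(n)}, \cref{fermat-w(n)} feeding the hypotheses of \cref{proof:numberofsolutions}. You are also right to write $M(S,n,t(n),w(n))$ rather than $M(Q,\dots)$ --- the exponent $t(n)^{7}$ in the statement only makes sense for the seven unknowns of $S$, so the paper's use of the letter $Q$ there (and the ``$[0,t(n)-1]^4$'' in \cref{fermat-t(n)}) is a notational slip that your version silently corrects.
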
 


\section{Counting the twin-prime pairs}\label{Section9}

In this paragraph we sketch the generation of closed form counting twin-prime pairs, without diving too much into details. 

The following theorem was proved by P. A. Clement in \cite{clement}:

\begin{theorem}
    A necessary and sufficient condition that two integers, $n$ and $n+2$, $n > 1$, both be prime, is that
    $$4[(n-1)! + 1] + n \equiv 0 \mod n(n+2).$$
\end{theorem}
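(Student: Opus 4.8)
The plan is to reduce Clement's two-sided condition to a pair of Wilson-type congruences, one modulo $n$ and one modulo $n+2$, and to invoke Wilson's Test in both directions (a number $m>1$ is prime iff $(m-1)!\equiv -1 \pmod m$, where the ``only if'' direction also excludes composite $m$, since $(m-1)!\equiv 0\pmod m$ for composite $m>4$ and $3!\equiv 2\pmod 4$). Writing $C$ for the condition $4[(n-1)!+1]+n\equiv 0\pmod{n(n+2)}$, I would first note that $C$ is equivalent to the conjunction of $4[(n-1)!+1]+n\equiv 0\pmod n$ and $4[(n-1)!+1]+n\equiv 0\pmod{n+2}$ whenever $\gcd(n,n+2)=1$; for $n\ge 3$ odd this gcd is $1$, so the Chinese Remainder Theorem makes the split clean. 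Because of the leading coefficient $4$, both directions hinge on $n$ (hence $n+2$) being odd, so that $4$ is invertible modulo each factor.

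For necessity, assume $n$ and $n+2$ are both prime with $n>1$; then $n\ge 3$, and both are odd and coprime. Modulo $n$, Wilson gives $(n-1)!\equiv -1$, so $4[(n-1)!+1]+n\equiv 0\pmod n$. Modulo $n+2$, I would start from Wilson for the prime $n+2$, namely $(n+1)!\equiv -1\pmod{n+2}$, and rewrite $(n+1)!=(n+1)\,n\,(n-1)!\equiv(-1)(-2)(n-1)!=2(n-1)!\pmod{n+2}$, so that $2(n-1)!\equiv -1$, hence $4(n-1)!\equiv -2$ and $4[(n-1)!+1]+n\equiv -2+4+n\equiv 0\pmod{n+2}$ using $n\equiv -2$. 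Combining the two congruences via coprimality yields $C$.

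For sufficiency, assume $C$. The crux is to establish first that $n$ must be odd. I would show that no even $n$ can satisfy even the weaker congruence $4[(n-1)!+1]\equiv 0\pmod n$: writing $n=2k$ with $k\ge 2$, the factor $k$ occurs among $1,\dots,2k-1$, so $k\mid(n-1)!$ and therefore $4(n-1)!\equiv 0\pmod n$, whence $4[(n-1)!+1]\equiv 4\pmod n$; this forces $n\mid 4$ and hence $n=4$, so no even $n\ge 6$ survives, while the cases $n=2,4$ are dismissed by direct substitution ($10\not\equiv 0\pmod 8$ and $32\not\equiv 0\pmod{24}$). Once $n$ is known to be odd, $\gcd(4,n)=\gcd(4,n+2)=1$, so modulo $n$ I can cancel the $4$ to obtain $(n-1)!\equiv -1\pmod n$, whence $n$ is prime by Wilson; and modulo $n+2$ the congruence gives $2(n-1)!\equiv -1$, which through the identity $(n+1)!\equiv 2(n-1)!\pmod{n+2}$ yields $(n+1)!\equiv -1\pmod{n+2}$ and hence, again by Wilson, that $n+2$ is prime.

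The main obstacle I anticipate is precisely the parity step in the sufficiency direction: the coefficient $4$ means that $C$ modulo $n$ only delivers $(n-1)!\equiv -1$ after cancelling $4$, which is legitimate only for odd $n$, so one genuinely cannot bypass the argument that even $n$ is impossible. Everything else is routine bookkeeping with the congruences $n\equiv -2$ and $n+1\equiv -1\pmod{n+2}$ together with two applications of Wilson's Test in each direction.
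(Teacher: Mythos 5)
The paper does not prove this statement at all: it is quoted as Clement's theorem with a citation to \cite{clement}, so there is no in-paper argument to compare against. Your proof is correct and is the classical derivation: split the modulus $n(n+2)$ by CRT (legitimate once $n$ is odd), apply Wilson's congruence modulo $n$ directly and modulo $n+2$ via $(n+1)!\equiv(-1)(-2)(n-1)!=2(n-1)!\pmod{n+2}$, and in the converse direction first rule out even $n$ so that the coefficient $4$ (and the factor $2$ modulo $n+2$) can be cancelled before invoking the converse of Wilson's test. All the delicate points --- the parity argument for even $n\ge 6$ via $n\mid 4(n-1)!$, the separate checks at $n=2,4$, and the invertibility of $2$ and $4$ modulo the odd numbers $n$ and $n+2$ --- are handled correctly, and this matches the paper's remark that Clement's criterion is ``closely related to Wilson's.''
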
 

In order to avoid the exceptional case $n = 1$, we replace $n$ with $k+2$. It follows that $k+2$ and $k+4$ are both prime, if and only if:
$$(k+2)(k+4) \,\,|\, \left (4 \cdot (k+1)! + k + 6 \right ).$$

A natural number $k+2 \leq n$ is the left-hand side of a twin-prime pair, if and only if $k$ satisfies the following system of relations:
\begin{eqnarray*}
    k + 2 & \leq & n \\
    f & = & (k+1)! \\
    (k+2)(k+4) & \mid  & \left (4 f + k + 6 \right )  
\end{eqnarray*} 

\begin{lemma}
    There is a simple singlefold positive exponential Diophantine representation of the factorial function. This is a polynomial exponential expression $E(n, f, \vec x)$, simple in $(f, \vec x)$, such that:
    $$\forall \,(n, f, \vec x) \in \mathbb N^{k+2}\,\,\,\, E(n, f, \vec x) \geq 0,$$
    $$\forall \, n \in \mathbb N\,\,\,\,f = n! \longleftrightarrow \exists\, \vec x \in \mathbb N^k\,\,\,\, E(n, f, \vec x) = 0,$$
    $$\forall \, n, f \in \mathbb N\,\,\forall\, \vec x_1, \vec x_2 \in \mathbb N^k\,\,\,\,E(n, f, \vec x_1) = 0 \wedge E(n, f, \vec x_2) = 0 \rightarrow \vec x_1 = \vec x_2.$$
\end{lemma}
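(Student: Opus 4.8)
The plan is to start from one of the arithmetic closed forms for the factorial recalled in \cref{SectionSupp} — say $n! = \floor{(8^{n^2})^n / \binom{8^{n^2}}{n}}$ from Prunescu and Shunia \cite{prunescushuniaprimes}, together with the closed form for $\binom{a}{b}$ from \cref{Section2} — and to convert it, operation by operation, into a finite conjunction of defining equations, introducing a fresh auxiliary variable for the output of every floor division, every remainder, and every non-base-$2$ exponentiation. The whole term is a fixed finite composition of $+$, truncated subtraction, $\cdot$, $\floor{\cdot/\cdot}$ and $2^{(\cdot)}$; each elementary operation is recorded by an exponential polynomial equation that is simple in its variables and whose fresh output is uniquely determined by its inputs. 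The top-level floor division is recorded so that the variable $f$ plays the role of the overall quotient, and the system will force $f=n!$.

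Concretely, I would translate each construct as follows. A floor division $q=\floor{A/B}$ together with its remainder $r = A\bmod B$ is recorded by $A = qB + r$ and $B = r + s + 1$, where $s\ge 0$ is a slack variable encoding $r<B$; for $B>0$ the triple $(q,r,s)$ is uniquely determined by $(A,B)$. Each non-base-$2$ exponentiation, such as $8^{x}$, is eliminated by Marchenkov's identity $a^{b} = 2^{(ab+a+1)b}\bmod(2^{ab+a+1}-a)$ from \cite{marchenkov2007superposition}, which is itself a remainder and is therefore recorded by the same kind of division relation with its own unique auxiliary variables. Moreover, exactly as in the Mersenne and Fermat constructions of \cref{Section5} and \cref{Section7}, I introduce an auxiliary variable for each remaining exponential subterm so that every monomial becomes simple, i.e. a product of constant-base powers of single variables. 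Proceeding along the finite, acyclic chain of definitions from the innermost subterms outward, every auxiliary variable is forced by the previously fixed variables; hence for a fixed $n$ the entire tuple $\vec x$ of auxiliary variables, and in particular $f$, is determined, which is precisely singlefoldness.

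It remains to combine the finitely many defining equations into a single simple nonnegative exponential polynomial. Each equation has the shape $\sum m_i^{+} = \sum m_j^{-}$ with the $m$'s exponential monomials taking nonnegative values, so I apply the conversion lemma of \cref{Section2} to each one, obtaining a simple exponential polynomial $F_\ell(n,f,\vec x)\ge 0$ that vanishes exactly when the corresponding relation holds and introduces only further auxiliary variables that are again uniquely determined (each new variable equals the monomial it names). Setting $E:=\sum_\ell F_\ell$ gives an expression that is simple in $(f,\vec x)$, is everywhere nonnegative, and vanishes iff all defining relations hold simultaneously, i.e. iff $f=n!$. Since every auxiliary variable is forced, the solution $\vec x$ is unique for each $(n,f)$ with $f=n!$, and there is none otherwise, yielding all three asserted properties.

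The main obstacle is bookkeeping rather than a single hard idea: one must verify that singlefoldness survives every stage of the unfolding. The delicate points are the slack variables $s$ enforcing the strict inequalities $r<B$, which are uniquely determined only because each divisor $B$ is positive — a fact that has to be checked for every division occurring in the closed form, including those created by Marchenkov's identity — and the monomial-naming variables introduced by the conversion lemma, whose uniqueness rests on the injectivity of $x\mapsto 2^{x}$ used inside that lemma. Once the acyclicity of the definition chain and the positivity of all divisors are confirmed, uniqueness propagates automatically and the construction goes through.
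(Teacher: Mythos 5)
Your proposal is correct and follows essentially the same route as the paper, which simply cites an existing construction and notes that one rewinds the factorial closed form by repeatedly applying the quotient rule $a=\lfloor b/c\rfloor \leftrightarrow \exists r,v\ (b=ac+r \wedge r+v+1=c)$ and the analogous remainder rule, exactly as in the Mersenne construction of \cref{Section5}. Your write-up is a faithful (and more detailed) elaboration of precisely that exercise, including the correct treatment of the slack variables and of the non-base-$2$ exponentiations via Marchenkov's identity.
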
 

\begin{proof}
    Such a singlefold positive exponential Diophantine representation is constructed by Prunescu and Shunia in \cite{prunescusauras2024representationcrecursive}. It is just an exercise to rewind the factorial closed form from \cref{SectionSupp} to write down such a representation, following the pattern exposed in \cref{Section5} for the expression $a = x(2^b)$. Namely, one repeatedly applies the rules to express the quotient and the remainder:
    $$a = \lfloor b/c \rfloor \leftrightarrow \exists\,r, v\,\,\,\, b = ac + r \,\wedge \, r + v + 1 = c,$$
    $$a = b \bmod c \leftrightarrow \exists\,q, v\,\,\,\, b = qc + a \,\wedge \, a + v + 1 = c.$$
\end{proof} 

\begin{theorem}
    There is an arithmetic term ${\cal T}(n)$ whose values express the exact number of prime numbers $p$ with $ 0 < p \leq n$ such that $p+2$ is prime.
\end{theorem}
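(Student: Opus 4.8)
The plan is to mirror, almost verbatim, the construction carried out for the Mersenne primes in \cref{Section5} and for the Fermat primes in \cref{Section7}. Starting from the system of relations preceding this theorem, I would first assemble a single exponential Diophantine polynomial $T(n, k, v, f, \vec x)$ whose vanishing exactly encodes the three conditions ``$k+2 \leq n$'', ``$f = (k+1)!$'', and ``$(k+2)(k+4) \mid (4f + k + 6)$''. The first condition contributes a squared term $(n - (k+2) - v)^2$ introducing a slack variable $v$; the divisibility condition contributes, after introducing a quotient variable $q$, a squared term $\bigl(4f + k + 6 - q(k+2)(k+4)\bigr)^2$, where I would expand $(k+2)(k+4) = k^2 + 6k + 8$ so that the whole expression is a genuine polynomial. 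The middle condition is handled by appending the simple singlefold positive exponential Diophantine representation $E(k+1, f, \vec x)$ of the factorial furnished by the preceding Lemma; adding $E$ (which is already a sum-of-squares-type nonnegative expression) to the two squared terms keeps the combined polynomial nonnegative and forces $f = (k+1)!$ exactly when it vanishes.

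Next I would verify that this combined polynomial $T$ satisfies the four hypotheses (i)--(iv) required by \cref{proof:numberofsolutions}. Simplicity in the tuple $(k, v, f, q, \vec x)$ follows because the factorial representation $E$ is simple in $(f, \vec x)$ by the Lemma, and the two additional squared terms are manifestly simple once expanded; any non-simple monomial such as $q k^2$ is linearized by the device used throughout \cref{Section5}, introducing an auxiliary variable together with a defining squared term of the form $(p - qk^2)^2$ (and similar for the other products), which preserves both simplicity and singlefoldness. The bounding term $t(n)$ is obtained by noting that $k, v \leq n$, that $f = (k+1)! $ is dominated by a fixed arithmetic term in $n$, and that $q$, the slack/quotient variables, and the auxiliary factorial-witnesses $\vec x$ are all bounded in terms of $f$ and $n$; one then takes $t(n)$ to be a convenient power large enough to exceed all of them, exactly as in \cref{mersenne-t(n)}. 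The width term $w(n)$ is produced by the same crude ``replace all minus signs by plus and all variables by $t$'' majorization used in \cref{mersenne-w(n)} and \cref{fermat-w(n)}, yielding a bound of the form $2^{w(n)}$ with $w(n)$ an explicit linear arithmetic term in $t(n)$.

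With (i)--(iv) checked, \cref{proof:numberofsolutions} immediately supplies the arithmetic term
$$\frac{\HW(M(T, n, t(n), w(n)))}{w(n)} - t(n)^{\ell},$$
where $\ell$ is the number of existentially quantified variables, and by the singlefold property each valid $k$ with $k+2 \leq n$ giving a twin-prime pair corresponds to exactly one solution tuple. Hence this count equals the number of $k$ with $0 < k+2 \leq n$ such that both $k+2$ and $k+4$ are prime, i.e. the number of primes $p$ with $0 < p \leq n$ and $p+2$ prime, which is the desired ${\cal T}(n)$; a harmless index adjustment (as the Mersenne count required adding $1$) may be needed to handle the boundary. The main obstacle I anticipate is not the Mazzanti machinery, which is purely mechanical here, but the bookkeeping in establishing that the factorial representation $E$ can be \emph{glued} to the divisibility and bound conditions while keeping the whole system simultaneously simple, singlefold, and nonnegative --- in particular ensuring that the factorial witnesses $\vec x$ remain uniquely determined by $(k, f)$ and do not interact with the freshly introduced quotient and slack variables in a way that breaks singlefoldness. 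Once that gluing is confirmed, the remaining steps are routine adaptations of the two fully worked-out cases.
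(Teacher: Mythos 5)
Your proposal follows essentially the same route as the paper: the paper's proof forms the exponential Diophantine equation $(n-k-2-a)^2 + E(k+1,f,\vec x) + (4f+k+6-b(k+2)(k+4))^2 = 0$, observes that the witnesses are uniquely determined by $k$, and appeals to Mazzanti's method with suitable (unspecified) $t(n)$ and $w(n)$ --- exactly your construction, down to the choice of squared terms. Your additional remarks on linearizing non-simple monomials and on verifying hypotheses (i)--(iv) only make explicit what the paper dismisses as ``a matter of patience,'' so the proposal is correct and matches the paper's argument.
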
 

\begin{proof}
    We consider the exponential Diophantine equation $F(n,k,f, \vec x, a, b) = 0$ defined as follows:
    $$(n - k - 2 - a)^2 + E(k+1, f, \vec x) + (4f + k + 6 - b(k+2)(k+4))^2 = 0. $$ 
    Consider $n$ to be a parameter and all other variables to be unknowns. The exponential Diophantine equation has a solution $(k, f, \vec x, a, b)$ if and only if $k + 2 \leq n$ is a prime number such that $k+4$ is a prime number as well. Moreover, for every such $k$, the tuple $(f, \vec x, a, b)$ is uniquely determined by  $k$. It follows that the number of solutions:
    $$| \{(k,f,\vec x, a, b)\,:\,F(n, k,f,\vec x, a, b) = 0\}| $$
    is equal with ${\cal F}(n)$. It is just matter of patience to find arithmetic terms $t(n)$ and $w(n)$ such that for all $n \in \mathbb N$ one has:
    $$\forall\,(k,f, \vec x, a, b) \in \mathbb N^{k+4} \,\,\,\,F(n,k,f, \vec x, a, b) = 0 \rightarrow (k, f, \vec x, a, b) \in [0, t(n)]^{k+4},$$
    $$\forall\,(k,f, \vec x, a, b) \in [0, t(n)]^{k+4}\,\,\,\,
    0 \leq F(n, k, f, \vec x, a, b) < 2^{w(n)}.$$
    The arithmetic term ${\cal F}(n)$ can be easily obtained by applying Mazzanti's method for $F(n, k, f, \vec x, a, b)$ with unique parameter $n$ and using the arithmetic terms $t(n)$ and $w(n)$. 
\end{proof} 

In a similar way, one can construct an arithmetic term ${\cal G}(n)$ to count the Sophie Germain primes $\leq n$. One can apply directly Wilson's theorem and write the definition of the Sophie Germain primes as:
$$p \,|\,(p-1)! + 1 \,\,\wedge \,\, (2p+1) \,|\, (2p)! + 1,$$
or one can use the more compact criterion inserted by Davide Rotondo in \href{https://oeis.org/A005384}{\texttt{OEIS A005384}}, see \cite{rotondo}, namely:
$$ p = 2 \,\vee \, p(2p+1) \,|\, ((p-1)!)^2 + 6p - 1.$$
Using either definition, one can construct a singlefold non-negative exponential Diophantine definition of the Sophie Germain as for the twin-prime pairs, and transform it in a counting arithmetic closed form. 

\section{Acknowledgement}
    The author thanks Lorenzo Sauras-Altuzarra for many inspiring discussions on this subject.

\end{document}